\documentclass[11pt,reqno]{amsart}
\usepackage{amsthm,amssymb,amsmath}
\usepackage{textcomp}
\usepackage[foot]{amsaddr}
\usepackage{xcolor}
\usepackage{mathrsfs}
\usepackage{hyperref}
\usepackage[T1]{fontenc}
\usepackage{lmodern}
\hypersetup{
	colorlinks =true , 
	linkcolor =blue,
	urlcolor = magenta,
	citecolor =red}

\newtheorem{theorem}{Theorem}[section]
\newtheorem*{theorem*}{Theorem}
\newtheorem{lemma}{Lemma}[section]

\newtheorem{proposition}{Proposition}[section]

\theoremstyle{definition}

\newtheorem{definition}{\bf Definition}[section]
\newtheorem*{definition*}{\bf Definition}

\newtheorem{example}{\bf Example}[section]

\newtheorem{remark}{Remark}
\newtheorem*{remark*}{Remark}

\newtheorem*{example*}{\bf Example}

\newcommand{\spana}{{\mbox{span}}}

\begin{document}

	\title[Linking theorems for multivalued functionals]{Linking theorems for multivalued functionals and application to partial differential inclusions}

	\author{Ablanvi Songo $^{1, a}$}
	\author{Fabrice Colin $^b$}
	\footnote{Corresponding author} 
{\address{ $^a$ Département de mathématiques, Université de Sherbrooke, Sherbrooke, Québec, Canada}
	\email{\href{mailto:ablanvi.songo@usherbrooke.ca}{{\textcolor{blue}{ablanvi.songo@usherbrooke.ca}}}}
	\address{$^b$ School of Engineering and Computer Science, Laurentian University, Sudbury, Ontario, Canada}
	\email{\href{mailto:fcolin@laurentian.ca}{{\textcolor{blue}{fcolin@laurentian.ca}}}}
	\noindent
	\keywords{Saddle-point theorem, Linking theorem, multivalued functionals, Semilinear Dirichlet boundary value problem for partial differential inclusions}
	
	\subjclass[2020]{Primary : 49J53; 35R70; Secondary : 49J35; 58E30; 58E05}

	\begin{abstract}
		Using the framework of critical point theory for multivalued functionals developed in \cite{Fri}, we establish some linking theorems for such functionals, which generalizes \cite[Theorem 2.11]{Wi}, \cite[Theorem 2.12]{Wi} and \cite[Theorem 2.12]{Fri}. The proofs of our abstract results rely on a general minimax principle for multivalued mappings. As an application, we obtain a nontrivial solution of a semilinear Dirichlet boundary value problem for partial differential inclusions.
	\end{abstract}

	\maketitle

	\section{Introduction}
	
In 1997, M.~Frigon developed in \cite{Fri} a critical point theory for
multivalued mappings with closed graph. In that work, Mountain Pass type
and minimax results were obtained. As an application, the author
established an existence result for partial differential inclusions.

In 2001, A.~Kristály and C.~Varga \cite{KV} showed how a quantitative
deformation lemma for continuous functionals can be used to locate
min--max critical points of multivalued functionals whose values have
closed graph. They also obtained Mountain Pass type results for
multivalued functionals.

Later, in 2002, A.~Kristály and C.~Varga \cite{KV2} proved a general
minimax theorem for multivalued mappings. As an application, they
obtained an existence result of critical points for multivalued mappings
satisfying the Cerami condition.

The aim of this paper is to extend the classical Saddle Point and
Linking theorems to multivalued functionals with closed graph. Our
Linking Theorem generalizes the Mountain Pass Theorem for multivalued
functionals established in \cite{Fri} and \cite{KV}.

To the best of our knowledge, this is the very first multivalued version of the classical Saddle and Linking theorems in the literature.

In Section~2, we recall the framework of the critical point theory for
multivalued functionals, together with the classical deformation lemma
for continuous functionals presented in \cite{KV2}.

In Section~3, we establish a quantitative deformation theorem and a
minimax principle for multivalued functionals. Using these results, we
obtain in Section~4 several critical point theorems in the multivalued
context, including a Saddle Point Theorem and a Linking Theorem.

Finally, in the last section we provide an application to a semilinear
Dirichlet boundary value problem for partial differential inclusions.

	\section{Preliminaries}
	Let $(E,d)$ be a metric space endowed with the metric $d$.
	\subsection{Weak slope for continuous functionals}
	  For $r>0$, define
	\[B(u,r) := \Big\{w\in E\;|\; d(u,w)<r\Big\}.\] 
	\begin{definition}[Weak slope, \cite{DM}]
		\label{def 2.1}
		Let $f: E \to \mathbb{R}$ be a continuous function and let $u\in E$. We denote by $|df|(u)$ the supremum of $\sigma \in [0, \infty)$ such that there exist $\delta >0$ and a continuous map 
		\begin{equation*}
			\mathcal{H} : B(u,\delta)\times [0,\delta] \rightarrow E
		\end{equation*}
		such that  for all  $v \in B(u,\delta)$, for all $ t\in [0,\delta]$, we have 
		\begin{enumerate}
			\item[$(a)$] $d(\mathcal{H}(v,t),v) \le t$,
			\item[$(b)$]  $f(\mathcal{H}(v,t))\le f(v) -\sigma t$.
		\end{enumerate}
		The extended real number $|df|(u)$ is called the weak slope of $f$ at $u$.
	\end{definition}
	\begin{definition}
		Let $f: E \to \mathbb{R}$ be a continuous function.
		We say that $u\in E$ is a critical point for $f$ if $|df|(u) = 0$. A real number $c$ is said to be a critical value for $f$, if there exists $u\in E$ such that $|df|(u)=0$ and $f(u)=c$.
		
	\end{definition}
	
	\begin{definition}[\cite{DM}]
		Let $f: E \to \mathbb{R}$ be a continuous function and $c \in \mathbb{R}$. We say that $f$ satisfies the Palais$-$Smale condition at level $c$ ($(PS)_c$ for short), if from every sequence $(u_h)$ in $E$ with $|df|(u_h)\to 0$ and $f(u_h)\to c$ as $h\to \infty$ it is possible to extract a subsequence $(u_{h_j})$ converging in $E$ (the limit of $(u_{h_j})$ is necessarily a critical point for $f$, see \cite[Proposition 2.6]{DM} ).
	\end{definition}
	\subsection{A quantitative deformation theorem for continuous functionals}
	 Let $S \subset E$ and $f : E \rightarrow \mathbb{R}$ be a continuous functional.
	
	We denote by 
	\begin{equation*}
		d(u, S):= \|u-S\|.
	\end{equation*}
	
	For every $\alpha, \beta \in \mathbb{R}$, we denote by
	\[S_{\alpha} :=\Big \{u \in E \;|\; d(u,S) \le \alpha\Big\}, \quad f_{\beta} :=\Big\{u \in E \;|\; f(u)\ge \beta \Big\},\]
	\[f^{\alpha} := \Big\{u \in E \;|\; f(u)\le \alpha \Big\},\quad f_\beta^{\alpha}:=f_\beta \cap f^\alpha. \]
	\begin{theorem}[Deformation theorem, \cite{KV2}]
		\label{theo 2.1}
		Let $(E,d)$ be a complete metric space, $f:E\to \mathbb{R}$ a continuous function, $S$ a closed subset of $E$, $c\in \mathbb{R}$ and $\lambda>0$. Let $\varepsilon>0$ such that
		\begin{equation*}
			\text{for every}\;\; u\in f^{-1}\Big([c-2\varepsilon, c+2\varepsilon]\Big)\cap S_{2\varepsilon}, \;\; \text{we have}\;\; |df|(u)>\dfrac{\varepsilon}{\sqrt{1+\varepsilon^2}}.
		\end{equation*}
		Then, there exists a continuous map
			$\eta: [0,1]\times E \to E$	such that
		\begin{enumerate}
			\item[$(a)$] for every $t \in [0,1] $, \[ d(\eta(t,u),u)\le \lambda t ;\]
			\item[$(b)$] for every $t\in [0,1]$ and for every $u\in E$, \[ f(\eta(t,u))\le f(u); \]
			\item[$(c)$] if $u\notin f^{-1}\Big([c-2\varepsilon, c+2\varepsilon]\Big) \cap S_{2\varepsilon}$, then, for every $t\in[0,1]$,
			\[\eta(t,u)=u;\]
			\item[$(d)$] \[\eta(1, f^{c+\varepsilon'}\cap S)\subset f^{c-\varepsilon'},\] where $\varepsilon' =\dfrac{\varepsilon \min(\varepsilon, \lambda)}{2 \sqrt{1+\varepsilon^2}}$;
			\item[$(e)$] for every $t\in \, ]0,1]$ and for every $u\in f^c \cap S$, we have
			\[f(\eta(t,u))<c.\]
		\end{enumerate}
	\end{theorem}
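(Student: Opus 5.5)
The plan is to follow the Degiovanni--Marzocchi construction for continuous functionals on metric spaces: build the deformation locally from the weak slope, glue the local pieces with a Lipschitz partition of unity, and integrate. Set $\sigma:=\varepsilon/\sqrt{1+\varepsilon^2}$ and $N:=f^{-1}([c-2\varepsilon,c+2\varepsilon])\cap S_{2\varepsilon}$; this set is closed because $f$ and $d(\cdot,S)$ are continuous. For every $u\in N$ we have $|df|(u)>\sigma$. Definition \ref{def 2.1} then gives $\delta_u>0$ and a continuous map $\mathcal H_u:B(u,\delta_u)\times[0,\delta_u]\to E$ with $d(\mathcal H_u(v,t),v)\le t$ and $f(\mathcal H_u(v,t))\le f(v)-\sigma t$. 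We shrink $\delta_u$ so that $\delta_u\le\lambda$ and so that $B(u,\delta_u)$ stays inside the larger set $\widetilde N:=f^{-1}(]c-3\varepsilon,c+3\varepsilon[)\cap\{d(\cdot,S)<3\varepsilon\}$. The open cover $\{B(u,\delta_u/2)\}_{u\in N}$ together with $E\setminus N$ covers $E$. By Stone's theorem, metric spaces are paracompact, so this cover has a locally finite refinement and a subordinate locally Lipschitz partition of unity.

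The key step is gluing. In the metric setting one cannot take convex combinations of the maps $\mathcal H_u$. I would instead use the standard trick of Corvellec/Degiovanni--Marzocchi (Theorem 2.8 in \cite{DM}) and compose the local deformations in a fixed order over a locally finite index set. Concretely, the aim is a continuous $\eta$ satisfying $d(\eta(t,u),u)\le\lambda t$, which is statement $(a)$, together with the decrease estimate
\[f(\eta(t,u))\le f(u)-\sigma\lambda' t\,\varphi(u)\]
for a small time scale $\lambda'\le\lambda$, as long as the trajectory stays in $N$. Here $\varphi:E\to[0,1]$ is a continuous cutoff with $\varphi=1$ on $f^{-1}([c-\varepsilon,c+\varepsilon])\cap S_\varepsilon$ and $\varphi=0$ off $N$. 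This cutoff yields $(c)$ immediately, and $(b)$ follows from the decrease estimate. To get $(e)$, let $u\in f^c\cap S$. If $f(u)<c-2\varepsilon$, continuity together with $(b)$ gives $f(\eta(t,u))\le f(u)<c$. Otherwise $\varphi(u)>0$ and the strict decrease forces $f(\eta(t,u))<f(u)\le c$ for $t>0$.

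For $(d)$, take $u\in f^{c+\varepsilon'}\cap S$ and suppose $f(\eta(1,u))>c-\varepsilon'$. Since $f$ does not increase along the path, the whole path $\eta([0,1],u)$ lies in $f^{-1}([c-\varepsilon',c+\varepsilon'])$. It also lies in $S_{\lambda}$ because of $(a)$. When $\lambda\le\varepsilon$ the path therefore stays where $\varphi=1$, and the total decrease is at least $\sigma\lambda$. When $\lambda>\varepsilon$, I would cap the speed so that the path moves at most $\varepsilon$; this is where the factor $\min(\varepsilon,\lambda)$ enters. In both cases the decrease is at least $\sigma\min(\varepsilon,\lambda)\ge2\varepsilon'$ with room to spare, since $\varepsilon'=\sigma\min(\varepsilon,\lambda)/2$. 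This contradicts $f(u)-f(\eta(1,u))<2\varepsilon'$.

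I expect the main obstacle to be the gluing step, namely producing a jointly continuous $\eta$ from the merely continuous, non-Lipschitz local maps $\mathcal H_u$ while keeping both the displacement bound and the uniform decrease rate. This is exactly the content of the deformation lemma in \cite{DM} (and of its quantitative form in \cite{KV2}), so I would invoke that construction and only verify the bookkeeping of the constants described above. I would use completeness of $E$ only if the construction is done by iteration or a limit argument.
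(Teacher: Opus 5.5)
\textbf{Gap: the passage from local to global deformation.} The paper does not prove this statement; it quotes it from \cite{KV2}, where it is a quantitative form of the Corvellec--Degiovanni--Marzocchi deformation theorem on complete metric spaces. Measured against that, your outline has a genuine gap exactly at the step you defer.

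The ordered composition of the $\mathcal H_u$ over a locally finite Lipschitz partition of unity is the right gluing device. But it only yields a map $\mathcal H(u,s)$ defined for $0\le s\le \delta(u)$, with $\delta$ continuous and positive on $N$ and, in general, no positive lower bound on $N$. A single composition therefore gives neither a uniform time scale $\lambda'$ nor, a fortiori, $\lambda'=\min(\varepsilon,\lambda)$.

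Yet $(d)$, with the explicit $\varepsilon'=\sigma\min(\varepsilon,\lambda)/2$, requires every $u\in f^{c+\varepsilon'}\cap S$ to be lowered by the full amount $2\varepsilon'=\sigma\kappa$, where $\kappa:=\min(\varepsilon,\lambda)$. The deformation must travel a macroscopic distance $\kappa$ while losing $\sigma$ per unit of parameter. Your estimate $f(\eta(t,u))\le f(u)-\sigma\lambda' t\varphi(u)$ only gives the level $\sigma\lambda'/2$. In your proof of $(d)$ you silently replace $\lambda'$ by $\lambda$ (resp.\ $\kappa$), and that replacement is the actual content of the theorem.

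Your $(d)$ argument also uses that $t\mapsto f(\eta(t,u))$ is nonincreasing and that $\varphi=1$ along the whole path. Neither follows from $(b)$, nor from an estimate with $\varphi$ frozen at the initial point. What is needed is an estimate along trajectories, for instance $f(\eta(t,u))\le f(\eta(s,u))-\sigma\kappa(t-s)$ whenever $\eta([s,t],u)\subset f^{-1}([c-\varepsilon,c+\varepsilon])\cap S_\varepsilon$.

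\textbf{What is missing, and why completeness matters.} The local steps must be concatenated infinitely often, or extended by a maximality argument. When the step lengths accumulate before the budget $\kappa$ is spent, you must pass to the limit of a Cauchy sequence, and this is where completeness enters. Joint continuity of the resulting $\eta$ then has to be checked.

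Completeness is not optional, contrary to your closing remark. Take $E=(0,1]$ with the usual metric, $f(x)=x$ and $S=E$. Then $|df|\equiv 1$, so the hypothesis holds for all $c,\varepsilon,\lambda$. But for $0<c<\varepsilon'$ the set $f^{c-\varepsilon'}$ is empty while $f^{c+\varepsilon'}\cap S$ is not, so $(d)$ fails.

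\textbf{Minor point on $(e)$.} $\varphi(u)>0$ need not hold when $f(u)$ is near $c-2\varepsilon$. This does no harm: the case $f(u)<c$ is settled by $(b)$, and only $f(u)=c$ needs strict decrease, where $\varphi(u)=1$.
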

	\subsection{Weak slope for multivalued functionals}
Let $F : E \rightarrow \mathbb{R}\cup \{\infty\}$ be a multivalued mapping with closed (and non empty values). We denote by
	\begin{equation*}
		\mathbf{\mathit{graph}\,F} = \Big\{(u,c)\in E\times \mathbb{R}\;\Big|\;  c \in F(u)\Big\}.
	\end{equation*}
	On $\mathbf{\mathit{graph}\,F}$, we introduce the metric $d_F$ defined by
	\begin{equation*}
		d_F\Big((u,c),(v,b)\Big) = \sqrt{\Big(d(u,v)\Big)^2 + |b-c|^2}.
	\end{equation*} 
	\begin{remark}
	The set $\mathbf{\mathit{graph}\,F}$ with the metric $d_F$ $\big(\mathbf{\mathit{graph}\,F}, d_F \big)$ is a complete metric space. Indeed, let $(x_n, s_n) \subset E \times \mathbb{R}$ be a Cauchy sequence such that $s_n \in F(x_n)$. Since $E\times \mathbb{R}$ is a compete space, then $(x_n,s_n)$ converges to a limit $(x,s)\in E\times \mathbb{R}$. In addition, since $\mathbf{\mathit{graph}\,F}$ is closed in $E\times\mathbb{R}$, we have $ (x,s)\in \mathbf{\mathit{graph}\,F}$.
	\end{remark}
	Let $B\Big((u,c), \delta\Big)$ be the open ball in $\mathbf{\mathit{graph}\,F}$, centered at $(u,c)$ of radius $\delta>0$.
	\begin{definition}[Weak slope for multivalued mapping, \cite{Fri}]
		
		\label{def 2.4}
		Let $F : E \rightarrow \mathbb{R}\cup \{\infty\}$ be a multivalued mapping with closed graph, and let $(u,c) \in \mathbf{\mathit{graph}\,F}$. The \textit{weak slope} of $F$ at $(u,c)$, denoted by $|dF|(u,c)$ is the supremum of $\sigma \in [0, \infty)$ such that there exist tel $\delta >0$, and a continuous function 
		\begin{equation*}
			\mathcal{H} =(\mathcal{H}_1, \mathcal{H}_2): B\Big((u,c),\delta\Big)\times [0,\delta] \rightarrow \mathbf{\mathit{graph}\,F},
		\end{equation*}
		such that, for every $ (v,b) \in B\Big((u,c),\delta\Big)$ and every $ t\in [0,\delta]$, we have
		\begin{enumerate}
			\item[$(a)$] $d_F\Big(\mathcal{H}\big((v,b),t\big),(v,b)\Big) \le t\sqrt{1 + \sigma^2}$;
			\item[$(b)$] $\mathcal{H}_2\big((v,b),t\big) \le b -\sigma t$.
		\end{enumerate}
	\end{definition}
	\begin{remark}[\cite{Fri}]
		In the case where $F(u) = \{f(u)\}$ is a continuous single-valued function, then \[|dF|(u, f(u))= |df|(u),\] where $|df|(u)$ is the weak slope of $f$ defined in Definition $\ref{def 2.1}$.
	\end{remark}
	Let us consider the function
	\begin{eqnarray*}
		\mathcal{G}_F &:& \mathbf{\mathit{graph}\,F} \to \mathbb{R}\\
		(u,b)	&\mapsto& b,
	\end{eqnarray*}
	where $F$ is as before. Then, $\mathcal{G}$ is continuous.
	
	The following result compares the weak slope of $\mathcal{G}_F$ with that of the multivalued map $F$:
	\begin{proposition}[\cite{Fri, KV2}]
		\label{prop 2.1}
		Let $(u,b)\in \mathbf{\mathit{graph}\,F}$. Then, 
		\begin{eqnarray*}
			|d\mathcal{G}_F|(u,b)&=& \dfrac{|dF|(u,b)}{\sqrt{1+|dF|^2(u,b)}},\quad \text{if}\quad |dF|(u,b)< \infty;\\
			|d\mathcal{G}_F|(u,b)&\ge& 1, \quad \text{if}\quad |dF|(u,b)=  \infty.
		\end{eqnarray*}
	\end{proposition}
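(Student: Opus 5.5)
The argument rests on one observation. For $\mathcal{G}_F$, Definition \ref{def 2.1} is applied on the metric space $(\mathbf{\mathit{graph}\,F}, d_F)$. So it uses exactly the same type of object as Definition \ref{def 2.4}: a continuous map $\mathcal{H}=(\mathcal{H}_1,\mathcal{H}_2)$ from a ball times an interval into $\mathbf{\mathit{graph}\,F}$, with $\mathcal{G}_F(\mathcal{H})=\mathcal{H}_2$. The two definitions differ only in how time is normalized. I will therefore pass between them by a linear reparametrization of time. Throughout, let $g(x)=x/\sqrt{1+x^2}$, which is a strictly increasing bijection $[0,\infty)\to[0,1)$ with inverse $g^{-1}(y)=y/\sqrt{1-y^2}$.

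\emph{Step 1: $|d\mathcal{G}_F|(u,b)\ge g(\sigma)$ for every admissible $\sigma$ in Definition \ref{def 2.4}.} Take $\sigma<|dF|(u,b)$ together with $\delta$ and $\mathcal{H}$ as in Definition \ref{def 2.4}. Put $\kappa=\sqrt{1+\sigma^2}$ and define
\[\mathcal{K}\big((v,b'),s\big)=\mathcal{H}\big((v,b'),s/\kappa\big)\]
on $B((u,b),\delta)\times[0,\delta]$; this is well defined because $s/\kappa\le\delta$. Then $d_F(\mathcal{K},(v,b'))\le (s/\kappa)\kappa=s$ and $\mathcal{K}_2\le b'-\sigma s/\kappa=\mathcal{G}_F(v,b')-g(\sigma)s$. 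Hence $\mathcal{K}$ is admissible in Definition \ref{def 2.1} with slope $g(\sigma)$, so $|d\mathcal{G}_F|(u,b)\ge g(\sigma)$. Taking the supremum over $\sigma$ gives $|d\mathcal{G}_F|(u,b)\ge g(|dF|(u,b))$ when $|dF|(u,b)<\infty$. When $|dF|(u,b)=\infty$ it gives $|d\mathcal{G}_F|(u,b)\ge\sup_\sigma g(\sigma)=1$, which is the second assertion.

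\emph{Step 2: the reverse inequality when $|dF|(u,b)<\infty$.} First note that any admissible $\sigma'$ for $\mathcal{G}_F$ satisfies $\sigma'\le1$. Indeed, for $t>0$,
\[\sigma' t\le b'-\mathcal{K}_2\le d_F(\mathcal{K},(v,b'))\le t.\]
Suppose, for contradiction, that $|d\mathcal{G}_F|(u,b)>g(|dF|(u,b))$. Since $g(|dF|(u,b))<1$, we can choose an admissible $\sigma'$ with $g(|dF|(u,b))<\sigma'<1$, with associated $\delta$ and $\mathcal{K}$. Set $\sigma=g^{-1}(\sigma')$, so that $\kappa=\sqrt{1+\sigma^2}=1/\sqrt{1-\sigma'^2}$ and $\sigma'\kappa=\sigma$. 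Let $\delta'=\delta/\kappa\le\delta$ and define
\[\mathcal{H}\big((v,b'),t\big)=\mathcal{K}\big((v,b'),\kappa t\big)\quad\text{on } B((u,b),\delta')\times[0,\delta'].\]
This map is continuous and takes values in $\mathbf{\mathit{graph}\,F}$. It satisfies $d_F(\mathcal{H},(v,b'))\le \kappa t=t\sqrt{1+\sigma^2}$ and $\mathcal{H}_2\le b'-\sigma'\kappa t=b'-\sigma t$. So $\sigma$ is admissible in Definition \ref{def 2.4}, which gives $|dF|(u,b)\ge\sigma=g^{-1}(\sigma')$, i.e. $\sigma'\le g(|dF|(u,b))$. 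This contradicts the choice of $\sigma'$.

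The main point needing care is Step 2, where the excluded value $\sigma'=1$ must be avoided, since $g^{-1}$ is undefined there. This is handled by the strict inequality $g(|dF|)<1$ in the finite case, which lets us choose $\sigma'<1$. The domain bookkeeping (shrinking both the ball and the time interval to $\delta/\kappa$) is routine.
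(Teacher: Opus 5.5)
Your proof is correct. The paper does not prove Proposition~\ref{prop 2.1}; it only quotes it from \cite{Fri, KV2}, so there is no in-paper argument to compare against.

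Your argument is a self-contained verification from Definitions~\ref{def 2.1} and~\ref{def 2.4}. Both notions use continuous deformations with values in $(\mathbf{\mathit{graph}\,F},d_F)$ and differ only in how time is normalized. The linear time change by the factor $\sqrt{1+\sigma^2}$ therefore turns slope $\sigma$ for $F$ into slope $\sigma/\sqrt{1+\sigma^2}$ for $\mathcal{G}_F$, and back again. The only delicate point is staying strictly below the value $1$ in the reverse direction, and you handle it correctly.

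Two minor remarks.

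In Step~1 you should start from an admissible $\sigma$, as your heading says, rather than an arbitrary $\sigma<|dF|(u,b)$. Condition (a) of Definition~\ref{def 2.4} depends on $\sigma$, so it is not immediate that the admissible set is downward closed. It does follow from your two steps combined, and the supremum argument is unaffected.

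Your observation that every admissible slope for $\mathcal{G}_F$ is at most $1$ gives $|d\mathcal{G}_F|(u,b)\le 1$ always. Hence $|d\mathcal{G}_F|(u,b)=1$ whenever $|dF|(u,b)=\infty$, which sharpens the stated inequality to an equality.
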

In the multivalued context, the notions of critical point of $F$ and of the Palais–Smale condition are well known.
	\begin{definition}[\cite{Fri}, Definition 2.4]
		Let $F : E \rightarrow \mathbb{R}\cup \{\infty\}$ be a multivalued mapping with closed graph, and let $c\in \mathbb{R}$. We say that $u \in X$ is a critical point of $F$ at level $c$, if $c\in F(u)$ and $|dF|(u,c) =0$. Set \[K_c := \Big\{ u \in E \;\Big|\; |dF|(u,c)=0\; \text{and}\; c \in F(u)\Big\}.\] Then, $K_c$ is the set of critical points of $F$ at level $c$. We say that $c$ is a critical value of $F$ if $K_c \ne \emptyset$.
	\end{definition}

	\begin{definition}[\cite{Fri}, Definition 2.5]
		\label{def 2.6}
	Let $F : E \rightarrow \mathbb{R}\cup \{\infty\}$ be a multivalued mapping with closed graph, and let $c\in \mathbb{R}$. The function $F$ satisfies the \textit{Palais$-$Smale condition} at level $c$ $(\text{$(PS)_c$})$ if every sequence $(u_n)$ in $E$ for which there exists $c_n\in F(u_n)$ with $c_n\to c$ and $|dF|(u_n,c_n)\to 0$, has a convergent subsequence in $E$.
	\end{definition}

	\section{A Quantitative deformation theorem and a general minimax principle for multivalued functionals}
	In this section, we establish a quantitative deformation theorem and a general minimax principle for multivalued mapping. 
	
	Let $F: E\rightarrow \mathbb{R}\cup \{\infty\}$ be a multivalued mapping with closed graph, $\delta>0$, and let $U$ be a closed subset of $\mathbf{\mathit{graph}\,F}$. Define
	\begin{equation*}
		U_\delta:=\Big\{(u,b)\in \;\mathbf{\mathit{graph}\,F}\; \;\Big|\; d_F\Big((u,b), U\Big)\le \delta\Big\}.
	\end{equation*}
	
		\subsection{A quantitative deformation theorem}
	We now propose a deformation theorem for multivalued mapping.
	\begin{theorem}[Deformation theorem for multivalued mapping]
		\label{theo 3.1}
	Let $F: E\rightarrow \mathbb{R}\cup \{\infty\}$ be a multivalued mapping with closed graph, $c\in \mathbb{R}$, and let $U$ be a closed subset of $\mathbf{\mathit{graph}\,F}$. Let $\lambda>0$. Let $\varepsilon_0 >0$ and $\delta>0$ such that
		\[\text{for every}\;\;(u,b)\in \Big\{(u,b)\in\; E\times \mathbb{R} \;\Big|\; b\in  [c-2\varepsilon_0, c+2\varepsilon_0] \Big\} \cap U_{2\delta},\;\;\text{we have}\]
		\begin{equation}
			\label{eq 1}
			|dF|(u,b)>\varepsilon_0.
		\end{equation}
	There exists a continuous function 
			$\eta =(\eta_1,\eta_2): [0,1]\times \mathbf{\mathit{graph}\,F} \to \mathbf{\mathit{graph}\,F}$ such that
		\begin{enumerate}
			\item[$(i)$] for every $t\in [0,1]$, \[d_F\Big(\eta(t,(u,b)),(u,b)\Big)\le \lambda t;\]
			\item [$(ii)$] \[\eta_2(t,(u,b))\le b;\]
			\item [$(iii)$] if $(u,b) \notin  \Big\{(u,b)\in\; E\times \mathbb{R} \;\Big|\; b\in  [c-2\varepsilon_0, c+2\varepsilon_0] \Big\} \cap U_{2\delta}$, then \[\eta(t,(u,b))=(u,b);\]
			\item [$(iv)$] \[\eta\Big(1,U\cap E\times (-\infty, c+\varepsilon']\Big) \subset E\times (-\infty, c-\varepsilon'], \;\; \text{for some}\;\; \varepsilon' \in (0,\varepsilon_0); \] 
			\item [$(v)$] for every $t\in ]0,1]$ and for every $(u,b)\in \Big(E\times(-\infty, c]\Big)\cap U$, we have \[\eta_2(t,(u,b))<c.\]
		\end{enumerate}
	\end{theorem}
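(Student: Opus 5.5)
We reduce the statement to the single-valued Deformation theorem (Theorem \ref{theo 2.1}), applied on the complete metric space $(\mathbf{\mathit{graph}\,F}, d_F)$ to the continuous functional $\mathcal{G}_F(u,b)=b$ with the closed set $S:=U$. By the Remark following the definition of $d_F$, $(\mathbf{\mathit{graph}\,F},d_F)$ is complete. The level sets of $\mathcal{G}_F$ are exactly the intersections of $\mathbf{\mathit{graph}\,F}$ with horizontal slabs. Thus $\mathcal{G}_F^{-1}([c-2\varepsilon,c+2\varepsilon])\cap U_{2\varepsilon}$ is the set $\{(u,b): b\in[c-2\varepsilon,c+2\varepsilon]\}\cap U_{2\varepsilon}$ appearing in the statement, and $\mathcal{G}_F^{\alpha}=\mathbf{\mathit{graph}\,F}\cap (E\times(-\infty,\alpha])$.

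The first step is to choose a single $\varepsilon>0$ for which the hypothesis of Theorem \ref{theo 2.1} holds. Take $\varepsilon:=\min(\varepsilon_0,\delta)$. Since $[c-2\varepsilon,c+2\varepsilon]\subset[c-2\varepsilon_0,c+2\varepsilon_0]$ and $U_{2\varepsilon}\subset U_{2\delta}$, every $(u,b)$ in $\mathcal{G}_F^{-1}([c-2\varepsilon,c+2\varepsilon])\cap U_{2\varepsilon}$ satisfies \eqref{eq 1}, that is, $|dF|(u,b)>\varepsilon_0\ge\varepsilon$. Proposition \ref{prop 2.1} then converts this into a bound on $|d\mathcal{G}_F|(u,b)$. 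If $|dF|(u,b)<\infty$, we use that $s\mapsto s/\sqrt{1+s^2}$ is strictly increasing to get
\[|d\mathcal{G}_F|(u,b)=\frac{|dF|(u,b)}{\sqrt{1+|dF|^2(u,b)}}>\frac{\varepsilon}{\sqrt{1+\varepsilon^2}}.\]
If $|dF|(u,b)=\infty$, then $|d\mathcal{G}_F|(u,b)\ge 1>\varepsilon/\sqrt{1+\varepsilon^2}$. So the hypothesis of Theorem \ref{theo 2.1} holds with this $\varepsilon$ and the given $\lambda$.

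Theorem \ref{theo 2.1} now yields a continuous $\eta=(\eta_1,\eta_2):[0,1]\times\mathbf{\mathit{graph}\,F}\to\mathbf{\mathit{graph}\,F}$, and we read off (i)--(v) as follows.
\begin{enumerate}
\item[(i)] This is item (a), since the metric of the space is $d_F$.
\item[(ii)] This is item (b), because $\mathcal{G}_F(\eta(t,(u,b)))=\eta_2(t,(u,b))$.
\item[(iii)] Item (c) says $\eta$ is the identity outside $\mathcal{G}_F^{-1}([c-2\varepsilon,c+2\varepsilon])\cap U_{2\varepsilon}$. This set is contained in the set described in (iii), because $\varepsilon\le\varepsilon_0$ and $\varepsilon\le\delta$. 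Hence any point outside the larger set is also outside the smaller one, and so it is fixed.
\item[(iv)] Item (d) gives $\eta(1,\mathcal{G}_F^{c+\varepsilon'}\cap U)\subset\mathcal{G}_F^{c-\varepsilon'}$ with $\varepsilon'=\varepsilon\min(\varepsilon,\lambda)/(2\sqrt{1+\varepsilon^2})$. This is positive and strictly less than $\varepsilon\le\varepsilon_0$, since $\min(\varepsilon,\lambda)\le\varepsilon$ and $\sqrt{1+\varepsilon^2}>\varepsilon$.
\item[(v)] This is item (e) translated in the same way.
\end{enumerate}

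There is no serious obstacle. The only points needing care are the bookkeeping that reconciles the two parameters $\varepsilon_0,\delta$ of the statement with the single parameter $\varepsilon$ of Theorem \ref{theo 2.1}, which is handled by taking the minimum, and the infinite-slope case of Proposition \ref{prop 2.1}.
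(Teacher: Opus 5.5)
Your proposal is correct and follows the paper's proof: you apply Theorem \ref{theo 2.1} to $\mathcal{G}_F$ on $(\mathbf{\mathit{graph}\,F},d_F)$ with $S=U$ and $\varepsilon=\min\{\varepsilon_0,\delta\}$, using Proposition \ref{prop 2.1} to transfer the slope bound, and then read off (i)--(v). Your bookkeeping is tidier than the paper's in three places: you treat the case $|dF|(u,b)=\infty$, you compute $\varepsilon'$ from $\min\{\varepsilon_0,\delta\}$ rather than from $\varepsilon_0$, and you justify (iii) by the inclusion $\mathcal{G}_F^{-1}([c-2\varepsilon,c+2\varepsilon])\cap U_{2\varepsilon}\subset\{b\in[c-2\varepsilon_0,c+2\varepsilon_0]\}\cap U_{2\delta}$.
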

	\begin{proof}
		Let us consider the continuous single-valued function
		\begin{eqnarray*}
			\mathcal{G}_F : \mathbf{\mathit{graph}\,F} &\to& \mathbb{R}\\
			(u,b)	&\mapsto& b.
		\end{eqnarray*}
		By Proposition $\ref{prop 2.1}$ and $(\ref{eq 1})$, for every $(u,b)\in U_{2\delta}\cap \mathcal{G}^{-1}_F\Big([c-2\varepsilon_0, c+2\varepsilon_0]\Big)$, we have	\[|d\mathcal{G}_F|(u,b)= \dfrac{|dF|(u,b)}{\sqrt{1+|dF|^2(u,b)}}> \dfrac{\varepsilon_0}{\sqrt{1+\varepsilon_0^2}},\]
		because, the function \begin{equation*}
			x\mapsto \dfrac{x}{\sqrt{1+x^2}}
		\end{equation*}
		is increasing, for every $x\in \mathbb{R}$.\\ We apply the deformation theorem (Theorem $\ref{theo 2.1}$) with \[\varepsilon =\min\{\varepsilon_0, \delta\}.\] For a fixed $\lambda > 0$, we obtain the existence of
		  \[\varepsilon' =\dfrac{\varepsilon_0 \min(\varepsilon_0, \lambda)}{2 \sqrt{1+\varepsilon_0^2}} \in (0,\varepsilon_0),\] and a continuous function $\eta =(\eta_1,\eta_2): [0,1]\times \mathbf{\mathit{graph}\,F}  \to \mathbf{\mathit{graph}\,F}$ such that\\
		$(1)$ \[d_F\Big(\eta(t,(u,b)),(u,b)\Big)\le \lambda t;\]
		$(2)$ \[\eta_2(t,(u,b)) = \mathcal{G}_F \Big(\eta_1\big(t,(u,b)\big), \eta_2\big(t,(u,b)\big)\Big) \le \mathcal{G}_F (u,b)=b;\]
		$(3)$  if $(u,b)\notin U_{2\delta}\cap \mathcal{G}^{-1}_F\Big([c-2\varepsilon, c+2\varepsilon]\Big)$, then \[\eta(t,(u,b)) = (u,b);\]
		$(4)$ \[\eta\Big(1, U \cap X\times (-\infty, c+\varepsilon']\Big)\subset X\times (-\infty, c-\varepsilon'] ;\]
		$(5)$ for every $t\in ]0,1]$ and for every $(u,b)\in \Big(X\times(-\infty, c]\Big)\cap U$, we have \[\eta_2(t,(u,b)) = \mathcal{G}_F \Big(\eta_1\big(t,(u,b)\big), \eta_2\big(t,(u,b)\big)\Big)<c.\]
	\end{proof}

	\subsection{A general minimax principle}
Let $(\Psi_m)_{m\in \mathbb{N}}$ be a family of homeomorphisms, where \[\Psi_m : \mathbb{R}^m \to\Psi_m(\mathbb{R}^m)\subset \mathbf{\mathit{graph}\,F}.\]
Let us introduce the following notations. Let $m\in \mathbb{N}$.
	\begin{align}
	\label{eq 2}	D_m &:= \Psi_m(B_1^m),\\
	\label{eq 3}	S_m &:= \partial D_m= \Psi_m(\partial B_1^{m}),
	\end{align}
where $B_1^m$ is the unit closed ball centered at 0 in $\mathbb{R}^m$ and $\partial B_1^{m}$, its boundary,
	\[ \Gamma_0^m \subseteq \mathcal{C}(S_m, \mathbf{\mathit{graph}\,F}),\]
	\[ 	\overline{\Gamma}_m:= \Big\{\gamma=\Big(\gamma_1, \gamma_2\Big) \in \mathcal{C}\Big(D_m, \mathbf{\mathit{graph}\,F}\Big)\;\Big|\; \gamma_{|_{S_m}} \in \Gamma_0^m \;\: \text{and}\;\; \gamma_2(u,b)\le b \Big\}.\]

	We denote $\pi_{\mathbb{R}}$ the projection of $\mathbf{\mathit{graph}\,F}$ on $\mathbb{R}$.
	
	\begin{theorem}[General minimax principle for multivalued mapping]
		\label{theo 3.2}
		Let $E$ be a Banach space, and $F : E \rightarrow \mathbb{R}\cup \{\infty\}$ a multivalued mapping with closed graph.
		Assume that
		\begin{equation}
			\label{eq 4}
			\infty> c:= \underset{\gamma \in \overline{\Gamma}_m}{\inf}\;\sup\; \pi_{\mathbb{R}} (\gamma(D_m)) > a:= \underset{\gamma_0 \in \Gamma_0^m}{\sup}\;\sup\; \pi_{\mathbb{R}} (\gamma_0(S_m)).
		\end{equation}
		Then, for every $\varepsilon \in  \Big(0, \frac{c-a}{2}\Big)$, $\delta >0$ and $\gamma \in \overline{\Gamma}_m$ such that
		\begin{equation}
			\label{eq 5}
			{\sup}\; \pi_{\mathbb{R}}(\gamma(D_m))\le c +\varepsilon', \;\: \text{for some}\;\; \varepsilon' \in (0,\varepsilon),
		\end{equation}
		there exists $(u,b) \in \mathbf{\mathit{graph}\,F}$ such that
		\begin{enumerate}
			\item[$(a)$] \[c-2\varepsilon \le b \le c+2\varepsilon,\]
			\item[$(b)$] \[d_F\Big((u,b), \gamma(D_m)\Big) \le 2 \delta,\]
			\item[$(c)$] \[|dF|(u,b)\le \varepsilon.\]
		\end{enumerate}
	\end{theorem}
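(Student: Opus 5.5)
We argue by contradiction and use the deformation theorem for multivalued mappings (Theorem \ref{theo 3.1}). Fix $\varepsilon\in\big(0,\frac{c-a}{2}\big)$, $\delta>0$ and $\gamma\in\overline{\Gamma}_m$ satisfying (\ref{eq 5}). Suppose no $(u,b)\in\mathbf{\mathit{graph}\,F}$ satisfies $(a)$, $(b)$ and $(c)$ simultaneously. Then, with $U:=\gamma(D_m)$, we have
\[|dF|(u,b)>\varepsilon \quad\text{for every } (u,b)\in U_{2\delta}\ \text{with } b\in[c-2\varepsilon,c+2\varepsilon].\]
The set $U$ is compact, being the continuous image of the compact set $D_m=\Psi_m(B_1^m)$, and hence closed in $\mathbf{\mathit{graph}\,F}$. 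So hypothesis (\ref{eq 1}) of Theorem \ref{theo 3.1} holds with $\varepsilon_0=\varepsilon$, and the theorem gives a deformation $\eta$ with properties $(i)$--$(v)$ for some $\varepsilon'\in(0,\varepsilon)$.

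The main obstacle is the mismatch of constants. Theorem \ref{theo 3.1} produces its own $\varepsilon'$, namely $\frac{\varepsilon\min(\varepsilon,\lambda)}{2\sqrt{1+\varepsilon^2}}$ as in the proof, whereas (\ref{eq 5}) only gives $\sup\pi_{\mathbb R}(\gamma(D_m))\le c+\varepsilon'$ for \emph{some} $\varepsilon'\in(0,\varepsilon)$. I would read the statement as saying that the $\varepsilon'$ in (\ref{eq 5}) is the one provided by the deformation theorem. Concretely, first fix $\lambda$ (say $\lambda=\varepsilon$) and let $\varepsilon'$ be the value from Theorem \ref{theo 3.1}. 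Since $\varepsilon'>0$, the definition of $c$ as an infimum guarantees that some $\gamma\in\overline{\Gamma}_m$ satisfies (\ref{eq 5}) with this $\varepsilon'$. For that $\gamma$, we get $\gamma(D_m)\subset U\cap E\times(-\infty,c+\varepsilon']$.

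Next, define $\tilde\gamma:=\eta(1,\gamma(\cdot))$ on $D_m$, which is continuous. We check that $\tilde\gamma\in\overline{\Gamma}_m$.
\begin{itemize}
\item[1.] \emph{Monotonicity.} By $(ii)$ and the fact that $\gamma\in\overline{\Gamma}_m$, we have $\tilde\gamma_2(u,b)\le\gamma_2(u,b)\le b$.
\item[2.] \emph{Boundary values.} For $(u,b)\in S_m$, the point $\gamma(u,b)=\gamma|_{S_m}(u,b)$ has second coordinate at most $a$. Since $\varepsilon<\frac{c-a}{2}$, we get $a<c-2\varepsilon$. By $(iii)$, $\eta(1,\cdot)$ is then the identity at these points, so $\tilde\gamma|_{S_m}=\gamma|_{S_m}\in\Gamma_0^m$.
\end{itemize}
Finally, $(iv)$ gives $\sup\pi_{\mathbb R}(\tilde\gamma(D_m))\le c-\varepsilon'<c$, which contradicts the definition of $c$ as an infimum over $\overline{\Gamma}_m$. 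Hence some $(u,b)$ satisfies $(a)$--$(c)$.
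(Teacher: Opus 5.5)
Your argument follows the paper's proof. You argue by contradiction, apply Theorem \ref{theo 3.1} with $U=\gamma(D_m)$ and $\varepsilon_0=\varepsilon$, set $\tilde\gamma=\eta(1,\gamma(\cdot))$, check $\tilde\gamma\in\overline{\Gamma}_m$ via $(ii)$ and $(iii)$, and contradict the definition of $c$ via $(iv)$. Your checks that $U$ is closed (by compactness) and that $\tilde\gamma_2\le\gamma_2\le b$ fill small holes the paper leaves.

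The constant mismatch you flag is real, and the paper's proof passes over it silently. It applies $(iv)$ to points at level at most $c+\varepsilon'$ with $\varepsilon'$ taken from (\ref{eq 5}), although $(iv)$ only covers the deformation's own $\varepsilon'$. In fact the statement as written is false, so your reformulation is a necessary correction, not a reading. Here is a counterexample. Let $E=\mathbb{R}^2$, $F=\{\varphi_H\}$ with $\varphi_H(x,y)=\frac{H}{2}(x^2-y^2)$, and $m=1$. Let $\Psi_1(t)=\big((s,Yt),\varphi_H(s,Yt)\big)$ with $0<s<Y$, and $\Gamma_0^1=\{\operatorname{id}\}$. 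For every $\gamma\in\overline{\Gamma}_1$, the $E$-component joins $(s,-Y)$ to $(s,Y)$, so it crosses the line $y=0$, where $\varphi_H\ge 0$. On the other hand, $\Psi_1(t)\mapsto\big((s|t|,Yt),\varphi_H(s|t|,Yt)\big)$ lies in $\overline{\Gamma}_1$ and has maximal level $0$. Hence $c=0$, while $a=\frac{H}{2}(s^2-Y^2)$ is as negative as you like. For $\gamma=\operatorname{id}$, $\sup\pi_{\mathbb{R}}(\gamma(D_1))=Hs^2/2$. Every $(u,b)$ with $d_F\big((u,b),\gamma(D_1)\big)\le 2\delta$ satisfies $|dF|(u,b)=|\nabla\varphi_H(u)|\ge H(s-2\delta)$, by the remark after Definition \ref{def 2.4}. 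Take $H=1$, $s=0.1$ and $\varepsilon=\delta=0.01$. Then (\ref{eq 5}) holds with $\varepsilon'=0.005$, yet $(c)$ fails at every point allowed by $(b)$.

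Two things in your write-up still need repair.

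\textbf{Quantifiers.} You fix $\gamma$ and make the contradiction hypothesis for $U=\gamma(D_m)$. You then invoke the infimum to get ``some $\gamma$'' satisfying (\ref{eq 5}) with the new $\varepsilon'$. That $\gamma$ need not be the one for which $|dF|>\varepsilon$ on $U_{2\delta}$ was assumed. Moreover, the existence of such a $\gamma$ plays no role in a statement about every $\gamma$. The right order is: fix $\varepsilon$, $\delta$ and $\lambda$, and define $\varepsilon'$. Then take an arbitrary $\gamma\in\overline{\Gamma}_m$ with $\sup\pi_{\mathbb{R}}(\gamma(D_m))\le c+\varepsilon'$ and argue by contradiction. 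This requires $\varepsilon'$ to be independent of $U$. The statement of Theorem \ref{theo 3.1} (``for some $\varepsilon'$'') does not give this, so you must take it from the proof.

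\textbf{The constant.} That proof applies Theorem \ref{theo 2.1} with $\bar\varepsilon=\min\{\varepsilon,\delta\}$. What it actually delivers is $\varepsilon'=\frac{\bar\varepsilon\min(\bar\varepsilon,\lambda)}{2\sqrt{1+\bar\varepsilon^{2}}}$, which depends on $\delta$. The $\delta$-free formula you copy is a slip in the paper. The dependence on $\delta$ is unavoidable. In the example above, take $H=\varepsilon/(2\delta)$ and $s$ slightly above $4\delta$. Then $|dF|>\varepsilon$ at every point within $2\delta$ of $\gamma(D_1)$, while $Hs^2/2$ is as close to $4\varepsilon\delta$ as you wish. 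So no threshold larger than $4\varepsilon\delta$ can work. In particular, your choice $\lambda=\varepsilon$, $\varepsilon'=\frac{\varepsilon^2}{2\sqrt{1+\varepsilon^2}}$ gives a false claim whenever $\delta<\varepsilon/(8\sqrt{1+\varepsilon^2})$.

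With $\varepsilon'$ computed from $\bar\varepsilon$ and the quantifiers in the order above, your argument proves the corrected theorem.
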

	\begin{proof}
	Suppose that the conclusion of the theorem is false, that is, there exist $\varepsilon \in \Big( 0, \frac{c-a}{2}\Big)$, $\delta >0$ and $\gamma \in \overline{\Gamma}_m$ satisfying\;
			${\sup}\; \pi_{\mathbb{R}}(\gamma(D_m))\le c +\varepsilon'$ for some $\varepsilon' \in (0,\varepsilon)$, such that
		\[\text{for every}\;\; (u,b)\in \Big(\gamma(D_m)\Big)_{2\delta}\cap \pi_{\mathbb{R}}^{-1}\Big([c-2\varepsilon, c+2\varepsilon]\Big),\;\; \text{we have}\;\; |dF|(u,b) > \varepsilon.\]
		
		For a fixed $\lambda>0$, we apply Theorem $\ref{theo 3.1}$ with  \[U=\gamma(D_m)\;\; \text{and}\;\; \varepsilon_0= \varepsilon.\]
		 Since $\varepsilon \in \Big( 0, \frac{c-a}{2}\Big)$, then 
		\begin{equation}
			\label{eq 6}
			c-2\varepsilon_0\ge	c-2\varepsilon >a.
		\end{equation}
		
		Define the continuous function
		\begin{eqnarray*}
			\beta=\Big(\beta_1, \beta_2\Big) &:& D_m\to \mathbf{\mathit{graph}\,F}\\
			(u,b) &\mapsto& \eta(1, \gamma(u,b)),
		\end{eqnarray*}
		where $\eta$ is given by Theorem $\ref{theo 3.1}$. For every $(u,b)\in S_m$, we have \[\beta(u,b)=\eta(1, \gamma_0(u,b)).\] By $(\ref{eq 6})$, \[\pi_{\mathbb{R}}\Big(\gamma_0(u,b)\Big)<a < c-2\varepsilon_0.\] It follows that \[\gamma_0(u,b)\notin \pi_{\mathbb{R}}^{-1}\Big([c-2\varepsilon_0, c+2\varepsilon_0]\Big).\] By $(iii)$ and $(ii)$ of Theorem $\ref{theo 3.1}$, on $S_m$, we have
		\[\beta(u,b)= \eta(1, \gamma_0(u,b)) =\gamma_0(u,b) \in \Gamma_0^m;\] \[\beta_2(u,b)=\eta_2(1,\gamma(u,b))\le b.\]
		We deduce that $\beta\in \overline{\Gamma}_m$. \\ For every $(u,b) \in D_m$, since \[\pi_{\mathbb{R}} \Big(\gamma(u,b)\Big) \le c+\varepsilon',\;\; \text{and}\;\; \gamma(u,b)\in U,\] then, $(iv)$ of Theorem $\ref{theo 3.1}$ implies that \[\beta(u,b)=\eta(1,\gamma(u,b))\in E\times (-\infty, c-\varepsilon'].\] Hence,
		\begin{equation*}
			{\sup}\; \pi_{\mathbb{R}}\Big(\beta(u,b)\Big) = {\sup}\; \pi_{\mathbb{R}}\Big( \eta(1,\gamma(u,b))\Big) \le c -\varepsilon'.
		\end{equation*}
		Finally, we obtain that
		\begin{equation*}
			c\le {\sup}\; \pi_{\mathbb{R}}\Big( \beta(u,b)\Big) \le c- \varepsilon',
		\end{equation*}
		which is a contradiction.
	\end{proof}
	\begin{theorem}
		\label{theo 3.3}
	Suppose that the assumptions of Theorem $\ref{theo 3.1}$ are satisfied and suppose that $F$ satisfies $(\ref{eq 4})$. Then, there exists a sequence $(u_n,c_n)\subset \mathbf{\mathit{graph}\,F}$ satisfying
	\begin{equation}
		\label{eq 7}
		c_n\to c, \;\; |dF|(u_n,c_n)\to 0,\;\; n\to \infty.
	\end{equation}
	In particular, if $F$ satisfies $(PS)_c$ condition $(\text{see Definition $\ref{def 2.6}$} )$, then $c$ is a critical value of $F$.	
	\end{theorem}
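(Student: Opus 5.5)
The plan is to apply Theorem \ref{theo 3.2} with a sequence of shrinking tolerances and then invoke the Palais--Smale condition. Fix $n\in\mathbb{N}$ large enough that $\varepsilon_n:=1/n<\frac{c-a}{2}$; this is possible because (\ref{eq 4}) gives $c>a$ and $c<\infty$. Set $\varepsilon_n':=\varepsilon_n/2\in(0,\varepsilon_n)$. Since $c$ is finite and defined as an infimum over $\overline{\Gamma}_m$, the definition of the infimum yields some $\gamma^{(n)}\in\overline{\Gamma}_m$ with
\[
\sup\;\pi_{\mathbb{R}}\big(\gamma^{(n)}(D_m)\big)\le c+\varepsilon_n'.
\]
So hypothesis (\ref{eq 5}) holds for the triple $(\varepsilon_n,\delta,\gamma^{(n)})$, with, say, $\delta=1$; the value of $\delta$ plays no role here.

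Theorem \ref{theo 3.2} then produces $(u_n,c_n)\in\mathbf{\mathit{graph}\,F}$ with $c-2/n\le c_n\le c+2/n$ and $|dF|(u_n,c_n)\le 1/n$. Letting $n\to\infty$ gives (\ref{eq 7}). The standing assumptions of Theorem \ref{theo 3.1} are used only through Theorem \ref{theo 3.2}, which already rests on them. The only point that needs care is to check that $\varepsilon'$ in (\ref{eq 5}) may be chosen freely in $(0,\varepsilon)$, as is done above.

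For the \emph{in particular} part, assume $F$ satisfies $(PS)_c$. Since $c_n\in F(u_n)$, $c_n\to c$ and $|dF|(u_n,c_n)\to0$, we may extract a subsequence $u_{n_j}\to u$ in $E$. Then $(u_{n_j},c_{n_j})\to(u,c)$ in $E\times\mathbb{R}$, and closedness of the graph gives $c\in F(u)$.

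The main obstacle is showing $|dF|(u,c)=0$, which requires lower semicontinuity of $(v,b)\mapsto|dF|(v,b)$ on $\mathbf{\mathit{graph}\,F}$. I would prove it directly from Definition \ref{def 2.4}, arguing by contradiction:
\begin{itemize}
\item[(1)] Suppose $|dF|(u,c)>\sigma>0$. Then there are $\delta>0$ and a deformation $\mathcal{H}$ defined on $B((u,c),\delta)\times[0,\delta]$ satisfying (a) and (b) with this $\sigma$.
\item[(2)] For $j$ large, $(u_{n_j},c_{n_j})$ lies in $B((u,c),\delta/2)$. So $B((u_{n_j},c_{n_j}),\delta/2)\subset B((u,c),\delta)$.
\item[(3)] Restricting $\mathcal{H}$ to $B((u_{n_j},c_{n_j}),\delta/2)\times[0,\delta/2]$ shows that $|dF|(u_{n_j},c_{n_j})\ge\sigma$.
\end{itemize}
This contradicts $|dF|(u_{n_j},c_{n_j})\to0$. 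Hence $|dF|(u,c)=0$, so $u\in K_c$ and $c$ is a critical value of $F$.

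Alternatively, one could transfer the problem to $\mathcal{G}_F$ through Proposition \ref{prop 2.1} and cite \cite[Proposition 2.6]{DM} for lower semicontinuity of the weak slope. The direct argument above avoids that detour.
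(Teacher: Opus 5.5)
Your route is essentially the paper's. Its proof of Theorem~\ref{theo 3.3} goes through the assertions $(T_1)$, $(T_2)$. That amounts to re-running the deformation argument of Theorem~\ref{theo 3.2} with $\varepsilon=1/n$, so invoking Theorem~\ref{theo 3.2} directly, as you do, is cleaner. You also prove the ``in particular'' clause, which the paper states but never justifies. Your argument for it (closed graph plus lower semicontinuity of $|dF|$ on $\mathbf{\mathit{graph}\,F}$) is correct. One small precision in step (1): the supremum gives you an admissible $\sigma''>\sigma$ with its own $\mathcal{H}$. Admissibility is not obviously monotone in $\sigma$, because condition $(a)$ involves $\sqrt{1+\sigma^2}$. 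Step (3) then yields $|dF|(u_{n_j},c_{n_j})\ge\sigma''>\sigma$, which is all you need.

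The step that does need repair is exactly the one you flagged. Taking $\varepsilon_n'=\varepsilon_n/2$ relies on the literal wording ``for some $\varepsilon'\in(0,\varepsilon)$'' of Theorem~\ref{theo 3.2}, but its proof does not support that reading. The contradiction there comes from $(iv)$ of Theorem~\ref{theo 3.1}. That step only pushes $U\cap E\times(-\infty,c+\varepsilon']$ below $c-\varepsilon'$ for the particular $\varepsilon'$ produced there, namely $\frac{\varepsilon_0\min(\varepsilon_0,\lambda)}{2\sqrt{1+\varepsilon_0^2}}<\frac{\varepsilon_0}{2}$. If you only know $\sup\pi_{\mathbb{R}}(\gamma^{(n)}(D_m))\le c+\varepsilon_n/2$, then $\gamma^{(n)}(D_m)$ need not lie where $(iv)$ acts. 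The deformed map $\beta$ need not drop below level $c$, and no contradiction follows.

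The free-choice statement is in fact false when $\delta$ is small relative to $\sqrt{\varepsilon}$. Take $F=\{f\}$ with $f(x,y)=x^2-y^2$ on $\mathbb{R}^2$, $m=1$, and $\Gamma_0^1=\{\operatorname{id}\}$ with endpoints over $(0,\pm1)$. Let $D_1$ be lifted from an arc through high levels, so that $\gamma_2(u,b)\le b$ does not bind. Then $a=-1$ and $c=0$. With $x_0=\sqrt{\varepsilon/2}$, the polygonal path $(0,-1)\to(x_0,-1)\to(x_0,1)\to(0,1)$, suitably parametrized, is admissible with $\sup\pi_{\mathbb{R}}=\varepsilon/2$. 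Yet it stays at distance $\ge x_0$ from the origin, while $|dF|(u,f(u))=2|u|\le\varepsilon$ forces $|u|\le\varepsilon/2$. So no point satisfies $(b)$ and $(c)$ once $2\delta<x_0-\varepsilon/2$.

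The fix costs nothing:
\begin{enumerate}
\item Fix $\delta$ and $\lambda$.
\item Let $\varepsilon_n'$ be the constant that Theorem~\ref{theo 3.1} produces for $\varepsilon_0=1/n$. It depends only on $\varepsilon_n,\delta,\lambda$, not on $U$, so there is no circularity.
\item Only then use the infimum defining $c$ to pick $\gamma^{(n)}\in\overline{\Gamma}_m$ with $\sup\pi_{\mathbb{R}}(\gamma^{(n)}(D_m))\le c+\varepsilon_n'$.
\end{enumerate}
With that choice the rest of your argument goes through unchanged. The paper's own proof leaves this same point implicit.
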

	\begin{proof}
		Under assumptions of Theorem $\ref{theo 3.1}$, suppose, in contrary, that there is no  sequence in $\mathbf{\mathit{graph}\,F}$  satisfying $(\ref{eq 7})$. Denote this assertion by $(T_1)$. We will establish that assertion $(T_1)$ implies the assertion $(T_2)$: there exist $\varepsilon \in \Big( 0, \frac{c-a}{2}\Big),\; \delta >0$ and $\gamma \in \overline{\Gamma}_m$ satisfying $(\ref{eq 4})$ such that \[\text{for every}\;\; (u,b)\in \Big(\gamma(D_m)\Big)_{2\delta}\cap \pi_{\mathbb{R}}^{-1}\Big([c-2\varepsilon, c+2\varepsilon]\Big)\subset \mathbf{\mathit{graph}\,F,} \;\; \text{we have}\;\; |dF|(u,b) > \varepsilon.\]
		To do so, we prove that $\neg (T_2)$ implies $\neg (T_1)$.\\
		Suppose that, for every $\varepsilon$ (take $\varepsilon=\frac{1}{n}$), $\delta >0$ and $\gamma \in \overline{\Gamma}_m$ satisfying $(\ref{eq 4})$, there exists a sequence $(u_n,c_n)\in \Big(\gamma(D_m)\Big)_{2\delta}\cap \pi_{\mathbb{R}}^{-1}\Big(\Big[c-\frac{2}{n}, c+\frac{2}{n}\Big]\Big)\subset \mathbf{\mathit{graph}\,F}$, such that $|dF|(u_n,c_n) \le \frac{1}{n}.$\\
		Since $\pi_{\mathbb{R}}(u_n,c_n)=c_n  \in \Big[c-\frac{2}{n}, c+\frac{2}{n}\Big]$, then
		\[	c_n\to c, \;\; |dF|(u_n,c_n)\to 0,\quad \text{whenever}\;\; n\to \infty.\]
		Thus we have shown that assertion $\neg (T_2)$ implies $\neg (T_1)$.\\
		But then, under assertion $(T_2)$, and for a fixed $\lambda>0$, we can apply Theorem $\ref{theo 3.1}$ with
		\[U=\gamma(D_m)\quad \text{and}\quad \varepsilon_0=\varepsilon.\] Let us define, as  in the proof of Theorem $\ref{theo 3.2}$, the continuous map
		\begin{eqnarray*}
			\beta=\Big(\beta_1, \beta_2\Big) : D_m &\to& \mathbf{\mathit{graph}\,F}\\
			(u,b) &\mapsto& \eta(1, \gamma(u,b)),
		\end{eqnarray*}
		where $\eta$ is given by Theorem $\ref{theo 3.1}$. We obtain a contradiction : 
		\[	c\le {\sup}\; \pi_{\mathbb{R}}\Big( \beta(u,b)\Big) \le c- \varepsilon'.\]
		The proof of Theorem $\ref{theo 3.3}$ is thus complete.
	\end{proof}
		\section{Critical point theorems for multivalued functionals}
	In this section, we give two examples where condition $(\ref{eq 4})$ is satisfied.
	\begin{theorem}[Saddle Point Theorem for multivalued mapping]
		\label{theo 4.1}
	Let $ \Big(E = Y \oplus Z, \|\cdot\|\Big)$ be an infinite-dimensional Banach space, where $Y $ is finite-dimensional. Let $F : E \rightarrow \mathbb{R}\cup \{\infty\}$ be a multivalued mapping with closed graph. Suppose that $F$ satisfies the following condition: 
	\begin{enumerate}
		\item[$(I_1)$] $\underset{u\in Z}{\inf}\; F(u) >0$,
	\item[$(I_2)$] There exist  $\rho>0$ and a homeomorphism $\Psi_m$ such that, for $m=\dim Y$, \[\Psi_m : \mathbb{R}^m \to \Psi_m(\mathbb{R}^m)\subset \mathbf{\mathit{graph}\,F}\;\; \text{and}\;\; \Psi_m(\partial B_1^{m})\subseteq \partial B_\rho \times (-\infty, 0],\] where $B_1^m$ is the unit closed ball centered at $0$ in $\mathbb{R}^m$ with boundary $\partial B_1^m$ and \\$B_\rho =\{ u\in Y\;|\; \|u\|\le \rho\}$.
\end{enumerate}
	 Finally, let $c\in \mathbb{R}$ be characterized by
		\begin{equation*}
			c:= \underset{\gamma \in \Gamma_m}{\inf}\,\max \,\pi_{\mathbb{R}} \Big(\gamma\big(\Psi_m(B_1^m)\big)\Big),
		\end{equation*}
		where
		\begin{equation*}
			\Gamma_m:= \Big\{\gamma=\Big(\gamma_1, \gamma_2\Big) \in \mathcal{C}\Big(\Psi_m(B_1^m), \mathbf{\mathit{graph}\,F}\Big)\;\Big|\; \gamma_{|_{\partial (\Psi_m(B_1^m))}} =\operatorname{id} \;\; \text{and}\;\; \gamma_2(u,b)\le b \Big\}.
		\end{equation*}
	Then, there exists a sequence $(u_n,c_n)\subset \mathbf{\mathit{graph}\,F}$ satisfying
		\begin{equation}
			\label{eq 8}
			c_n\to c, \;\; |dF|(u_n,c_n)\to 0,\;\; n\to \infty.
		\end{equation}
		Such a sequence is called a Palais--Smale sequence at level $c$, or a $(PS)_c$ sequence.
	\end{theorem}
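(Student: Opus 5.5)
The plan is to reduce the statement to Theorem \ref{theo 3.3}. We take $D_m=\Psi_m(B_1^m)$, $S_m=\Psi_m(\partial B_1^m)$, and $\Gamma_0^m=\{\operatorname{id}_{S_m}\}$, viewing the identity as the inclusion $S_m\hookrightarrow \mathbf{\mathit{graph}\,F}$. With these choices $\overline{\Gamma}_m=\Gamma_m$ and
\[a=\sup \pi_{\mathbb{R}}(S_m)\le 0,\]
because $\Psi_m(\partial B_1^m)\subseteq \partial B_\rho\times(-\infty,0]$ by $(I_2)$. The quantity $c$ in the statement is then exactly the minimax level in (\ref{eq 4}). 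Since $\gamma(D_m)$ is compact, the maximum is attained, so $\inf\sup$ coincides with $\inf\max$. Once we check
\[\infty>c>a,\]
Theorem \ref{theo 3.3} (whose proof only uses Theorem \ref{theo 3.1} applied with $U=\gamma(D_m)$) gives the required $(PS)_c$ sequence.

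The finiteness $c<\infty$ is easy. The map $\gamma=\operatorname{id}_{D_m}$ lies in $\Gamma_m$, and $\pi_{\mathbb{R}}(D_m)$ is bounded because $D_m$ is compact and $\pi_{\mathbb{R}}$ is continuous. Hence $c\le \max \pi_{\mathbb{R}}(D_m)<\infty$.

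The main obstacle is the strict inequality $c>a$. The plan is to show the linking inequality
\[\max \pi_{\mathbb{R}}(\gamma(D_m))\ \ge\ \inf_{u\in Z}F(u)=:\mu>0\quad\text{for every }\gamma\in\Gamma_m.\]
This gives $c\ge\mu>0\ge a$. To prove it, fix $\gamma\in\Gamma_m$ and let $P:E\to Y$ be the continuous projection along $Z$, which exists because $Y$ is finite-dimensional. Consider
\[g:=P\circ\gamma_1\circ\Psi_m : B_1^m\to Y\cong\mathbb{R}^m.\]
On $\partial B_1^m$ we have $\gamma=\operatorname{id}$, so $g(x)=P(\Psi_m(x)_1)=\Psi_m(x)_1\in\partial B_\rho\subset Y$. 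Here I am reading $(I_2)$ as saying that the first component of $\Psi_m$ sends $\partial B_1^m$ into $\partial B_\rho\subset Y$, and I will also need that this boundary map has nonzero Brouwer degree with respect to $0$; in the intended model $\Psi_m(x)=(\rho x, f(\rho x))$ it is simply $x\mapsto\rho x$. Under that assumption the homotopy invariance of the Brouwer degree gives $\deg(g,\operatorname{int}B_1^m,0)\neq0$, so there is $x_0$ with $g(x_0)=0$, that is, $\gamma_1(\Psi_m(x_0))\in Z$.

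At that point $\gamma(\Psi_m(x_0))=(u_0,b_0)$ with $u_0\in Z$ and $b_0\in F(u_0)$, so $b_0\ge\mu$ by $(I_1)$. This yields the linking inequality. I expect the only delicate point to be justifying the degree condition from $(I_2)$, since a bare homeomorphism onto a subset of the graph does not obviously give it. I would make it explicit as part of the reading of $(I_2)$, namely that $x\mapsto(\Psi_m(x))_1$ is a homeomorphism of $B_1^m$ onto $B_\rho$ mapping boundary to boundary; it then has degree $\pm1$ by invariance of domain. The rest is bookkeeping: with $c>a$ in hand, Theorem \ref{theo 3.3} produces $(u_n,c_n)$ with $c_n\to c$ and $|dF|(u_n,c_n)\to0$.
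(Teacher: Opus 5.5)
Your proposal is correct and follows the paper's route. You reduce to Theorem~\ref{theo 3.3} with $\Gamma_0^m=\{\operatorname{id}\}$ and show that $\gamma(D_m)$ meets $Z\times\mathbb{R}$ for every $\gamma\in\Gamma_m$, which gives $c\ge\inf_{Z}F>0\ge a$. The differences are minor: you get the intersection from the Brouwer degree of $P\circ\gamma_1\circ\Psi_m$, where the paper argues by contradiction via the non-retraction theorem, and you also check $c<\infty$.

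Your extra nondegeneracy reading of $(I_2)$ is genuinely needed, and the paper relies on it too. For a multivalued $F$, the $E$-component of $\Psi_m|_{\partial B_1^m}$ need not have nonzero degree: for $m=1$, both points of $\Psi_1(\partial B_1^1)$ may lie over the same point of $\partial B_\rho$, and then one can have $c=a$ with no $(PS)_c$ sequence. The paper's proof uses the same assumption implicitly, by replacing the given $\Psi_m$ with the explicit map $x\mapsto(\rho_0\sum_{i}x_ie_i,b)$.
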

	
	\begin{proof}
		 Denote $ m:=\dim Y $. Let $\rho_0>0$. 
		We consider, for $b\le0$, the map
		\begin{eqnarray*}
			\Psi_m : \mathbb{R}^m  &\to& \Psi_m(\mathbb{R}^m) \subset \mathbf{\mathit{graph}\,F}\\
			x=(x_1,\ldots,x_m)
			&\mapsto& \left( \rho_0 \sum_{i=1}^m x_i e_i, \;b\right).
		\end{eqnarray*}
		Then, $\Psi_m$ is a homeomorphism that satisfies the condition $(I_2)$ (with $\rho= \rho_0\|\sum_{i=1}^m x_i e_i\|$).\\
		We recall the following notations: 
			\begin{equation*}
				D_m := \Psi_m(B_1^m),\;\;
			S_m := \partial D_m = \Psi_m(\partial B_1^{m}).
		\end{equation*}
		
		In order to apply Theorem $\ref{theo 3.3}$, we have only to verify that \[c\ge \underset{u\in Z}{\inf}\; F(u).\]
		Let us prove that, for every $\gamma \in \Gamma_m$, we have $\gamma(D_m)\cap (Z\times \mathbb{R})\ne \emptyset$.\\
		Denote by $\pi_Y$ the projection of $\mathbf{\mathit{graph}\,F}$ onto $Y$ such that $\pi_Y\Big(\mathbf{\mathit{graph}\,F}\cap (Z\times \mathbb{R})\Big)=\Big\{0\Big\}$. Suppose there exists $\gamma \in \Gamma_m$ such that  \[\gamma(D_m)\cap (Z\times \mathbb{R})= \emptyset\] and define the map
	\begin{eqnarray*}
		R_m : B^m_1 &\to& \partial B_\rho\\
		x&\mapsto& \dfrac{\rho}{\Big\|\pi_Y\Big(\gamma\big(\Psi_m(x)\big)\Big)\Big\|}\pi_Y\Big(\gamma\big(\Psi_m(x)\big)\Big).
	\end{eqnarray*}
		The map $R_m$ is well defined and is continuous. \\
		Set
		$L(x_1,\ldots,x_m)
		:= \sum_{i=1}^m x_i e_i$.
		Let us consider the homeomorphism
		\begin{eqnarray*}
			\Phi_Y: \partial B^m_1 &\to& \partial B_\rho\\
			x=(x_1, \ldots, x_m) &\mapsto& \dfrac{\rho}{\|L(x)\|}L(x).
		\end{eqnarray*}
		We claim that the continuous map
		\begin{eqnarray*}
			R: B_1^m &\to&\partial B_1^m\\
			x &\mapsto& \Phi_Y^{-1}\circ R_m(x)
		\end{eqnarray*}
		is a retraction from $B^m_1$ onto $\partial B_1^m$. Indeed, let $x\in \partial B_1^m$. Then, \[R_m(x)= \pi_Y(\Psi_m(x))=\Phi_Y(x)\in \partial B_\rho.\]
		We deduce that
		\[R(x)=\Phi_Y^{-1}(\Phi_Y(x))=x.\]
		It follows that, $R_m$ is a retraction from $B^m_1$ onto its boundary $\partial B^m_1$. Since $\dim Y <\infty$, this is impossible by the Non retractability Theorem  (see \cite[Theorem D.11]{Wi}). Hence, for every $\gamma \in \Gamma_m$, we have \[\gamma(D_m)\cap (Z\times \mathbb{R})\ne \emptyset .\]
		By assumption $(I_2)$, 
		\begin{equation*}
			a= {\max}\; \pi_{\mathbb{R}} (S_m)\le 0 < \underset{u\in Z}{\inf}\; F(u) .
		\end{equation*}
		For every $\gamma \in \Gamma_m$, there exists $\mathcal{E} = \gamma(D_m) $ such that
		\begin{equation*}
			{\max}\; \pi_{\mathbb{R}} (\mathcal{E})\ge \max\; \pi_{\mathbb{R}}((Z\times \mathbb{R})\cap \mathcal{E})\ge \underset{u\in Z}{\inf}\; F(u).
		\end{equation*}
		Therefore,
		\begin{equation*}
			c= \underset{\gamma \in \Gamma_m}{\inf}\; {\max}\; \pi_{\mathbb{R}} (\mathcal{E}) \ge \underset{u\in Z}{\inf}\; F(u).
		\end{equation*}
		Since \[c\ge a= {\max}\; \pi_{\mathbb{R}} (S_m),\] then $F$ satisfies the relation $(\ref{eq 4})$ of Theorem $\ref{theo 3.2}$ with \[\Gamma_0^m =\Big\{\operatorname{id}\Big\} \;\; \text{and}\;\;  \operatorname{id}(S_m)= S_m.\] By applying Theorem $\ref{theo 3.3}$, we obtain the existence of a sequence $(u_n,c_n)\subset \mathbf{\mathit{graph}\,F}$ satisfying $(\ref{eq 8})$.
	\end{proof}

	\begin{theorem}[Linking Theorem for multivalued mapping]
	\label{theo 4.2}
		Let $ \Big(E = Y \oplus Z, \|\cdot\|\Big)$ be an infinite-dimensional Banach space, where $Y $ is finite-dimensional. Let $F : E \rightarrow \mathbb{R}\cup \{\infty\}$ be a multivalued mapping with closed graph. Let $\rho>r>0$ and let $z\in Z$ such that $\|z\|=r$.
		Define
		\begin{equation*}
			M : = \Bigl\{ u=y+\lambda z\; \Big|\; \| u\| \le \rho,\; \lambda \ge 0, \;y \in Y\Bigr\}, \\
		\end{equation*}
		\begin{equation*}
			\partial M := \Bigl\{ u= y + \lambda z\in M \;\Big|\; \Big(\|u\| =\rho \;\; \text{and}\;\; \lambda \ge 0\Big) \; \text{or}\; \Big(\|u\| \le \rho \;\; \text{and}\;\;  \lambda =0\Big)  \Bigr\},\\
		\end{equation*}
		\begin{equation*}
			N := \Bigl\{ u \in Z \;\Big|\; \|u\| =r \Bigr\}.
		\end{equation*}Suppose that $F$ satisfies the following condition: 
		\begin{enumerate}
	\item[$(I_1)$]	$\underset{u\in N}{\inf}\; F(u) >0$,
	\item[$(I_2)$] there exists a homeomorphism $\Psi_m$ such that, for $m=\dim Y +1$, \[\Psi_m : \mathbb{R}^m \to \Psi_m(\mathbb{R}^m)\subset \mathbf{\mathit{graph}\,F}\;\; \text{and}\;\; \Psi_m(\partial B_1^{m})\subseteq \partial M \times (-\infty, 0],\] where $B_1^m$ is the unit closed ball centered at $0$ in $\mathbb{R}^m$ with boundary $\partial B_1^m$. 
\end{enumerate}
	Finally, let $c\in \mathbb{R}$ be characterized by
	\begin{equation*}
		c:= \underset{\gamma \in \Gamma_m}{\inf}\,\max \,\pi_{\mathbb{R}} \Big(\gamma(\Psi_m(B_1^m)\Big),
	\end{equation*}
	where
	\begin{equation*}
		\Gamma_m:= \Big\{\gamma=\Big(\gamma_1, \gamma_2\Big) \in \mathcal{C}\Big(\Psi_m(B_1^m), \mathbf{\mathit{graph}\,F}\Big)\;\Big|\; \gamma_{|_{\partial (\Psi_m(B_1^m))}} =\operatorname{id} \;\; \text{and}\;\; \gamma_2(u,b)\le b \Big\}.
	\end{equation*}
	Then, there exists a sequence $(u_n,c_n)\subset \mathbf{\mathit{graph}\,F}$ satisfying
	\begin{equation}
		\label{eq 9}
		c_n\to c, \;\; |dF|(u_n,c_n)\to 0,\;\; n\to \infty.
	\end{equation}
	Such a sequence is called a Palais--Smale sequence at level $c$, or a $(PS)_c$ sequence.
	\end{theorem}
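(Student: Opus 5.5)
The plan is to mirror the proof of Theorem \ref{theo 4.1}. The goal is to verify the strict inequality (\ref{eq 4}) of Theorem \ref{theo 3.2} with $\Gamma_0^m=\{\operatorname{id}\}$, $D_m=\Psi_m(B_1^m)$, $S_m=\Psi_m(\partial B_1^m)$, and then invoke Theorem \ref{theo 3.3}. By $(I_2)$ we have $S_m\subseteq \partial M\times(-\infty,0]$, hence
\[a=\max \pi_{\mathbb{R}}(S_m)\le 0<\inf_{u\in N}F(u).\]
So the whole proof reduces to a \emph{linking} statement: for every $\gamma\in\Gamma_m$ we have $\gamma(D_m)\cap (N\times\mathbb{R})\neq\emptyset$. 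Once this holds, any point of the intersection is $(u,b)$ with $u\in N$ and $b\in F(u)$, so $\max\pi_{\mathbb{R}}(\gamma(D_m))\ge \inf_N F>0\ge a$. Taking the infimum gives $c\ge\inf_N F>a$. Finiteness of $c$ follows by taking $\gamma=\operatorname{id}\in\Gamma_m$, since $D_m$ is compact and $\pi_{\mathbb{R}}$ is continuous.

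To prove the linking, I would follow the classical argument of \cite[Theorem 2.12]{Wi}, transported through $\Psi_m$. Let $P:E\to Y$ be the continuous projection associated with $E=Y\oplus Z$, and write $u=Pu+(I-P)u$. Identify $Y\oplus\mathbb{R}z$ with $\mathbb{R}^m$. Suppose, for contradiction, that some $\gamma\in\Gamma_m$ misses $N\times\mathbb{R}$. For $x\in B_1^m$, set $w(x)=\pi_E(\gamma(\Psi_m(x)))$, where $\pi_E$ is the projection of the graph onto $E$. Then define
\[\Phi(x)=\Big(Pw(x),\ \|(I-P)w(x)\|\Big)\in Y\times\mathbb{R}.\]
A zero of $\Phi(x)-(0,r)$ would give $Pw(x)=0$ and $\|(I-P)w(x)\|=r$, that is, $w(x)\in N$. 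So $\Phi$ avoids the point $p=(0,r)$. On $\partial B_1^m$ we have $\gamma=\operatorname{id}$, so $w(x)=\pi_E\Psi_m(x)\in\partial M$. Writing $w(x)=y+\lambda z$ with $\lambda\ge 0$, we get $\Phi(x)=(y,\lambda r)$. Thus $\Phi$ restricted to $\partial B_1^m$ is $\pi_E\circ\Psi_m$ composed with the homeomorphism $M\to \widetilde M:=\{(y,\lambda r)\}$, and its image lies in $\partial\widetilde M$. The point $p$ lies in the interior of $\widetilde M$, because $\|z\|=r<\rho$.

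The contradiction then comes from Brouwer degree, or equivalently from the non-retractability theorem used in Theorem \ref{theo 4.1}. The map $\Phi$ is a continuous extension to $B_1^m$ of a map $\partial B_1^m\to\partial\widetilde M\subset \mathbb{R}^m\setminus\{p\}$ that avoids $p$. Composing with the radial retraction of $\mathbb{R}^m\setminus\{p\}$ onto the boundary of the convex body $\widetilde M$ (a topological sphere around $p$) gives a map $B_1^m\to\partial\widetilde M$. If its boundary restriction is a homeomorphism onto $\partial\widetilde M$, composing with that inverse yields a retraction $B_1^m\to\partial B_1^m$, which is impossible.

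The main obstacle is exactly this last point. Hypothesis $(I_2)$ only says $\Psi_m(\partial B_1^m)\subseteq \partial M\times(-\infty,0]$, not that $\pi_E\circ\Psi_m$ maps $\partial B_1^m$ homeomorphically onto $\partial M$. I would handle this in one of two ways. The first is to read $(I_2)$ as in the proof of Theorem \ref{theo 4.1}, with $\Psi_m(x)=(\text{affine parametrization of }M,\ b)$, so that $\pi_E\circ\Psi_m:B_1^m\to M$ is a homeomorphism carrying the sphere onto $\partial M$. The second, more generally, is to argue that an injective continuous map from the sphere $\partial B_1^m$ into the sphere $\partial M\cong S^{m-1}$ is onto by invariance of domain, and therefore has degree $\pm1$. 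Either way, the boundary map has nonzero degree about $p$, which forbids the extension $\Phi$ from avoiding $p$. This gives the intersection property and completes the argument via Theorem \ref{theo 3.3}.
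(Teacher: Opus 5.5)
Your route is the paper's. You reduce, via Theorem~\ref{theo 3.3} with $\Gamma_0^m=\{\operatorname{id}\}$, to the intersection property $\gamma(D_m)\cap(N\times\mathbb{R})\neq\emptyset$, and you get that property from non-retractability. Your map $x\mapsto(Pw(x),\|(I-P)w(x)\|)$ is the paper's $\mathcal{R}_m(x)=Pw(x)+\|w(x)-Pw(x)\|r^{-1}z$ under the identification $(y,s)\leftrightarrow y+sr^{-1}z$; your excluded point $(0,r)$ is the paper's $0+z$.

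Under your reading (1) the argument is correct, and it is what the paper tacitly uses: the paper builds a concrete $\Psi_m$ inside the proof and then asserts $\mathcal{R}_m=\Phi_{Y\oplus\mathbb{R}z}$ on $\partial B_1^m$. Only the boundary is used, so state the strengthened hypothesis directly: $\pi_E\circ\Psi_m$ maps $\partial B_1^m$ homeomorphically onto $\partial M$. (An affine parametrization of the half-ball $M$ does not exist, a constant height $b$ need not lie in $F(u)$, and the paper's own $\Psi_m$ is not injective on $\{x_m\le 0\}$.)

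The genuine gap is your alternative (2). Invariance of domain needs $\pi_E\circ\Psi_m|_{\partial B_1^m}$ to be injective. This does not follow from $\Psi_m$ being a homeomorphism into $\mathit{graph}\,F$: two points of the sphere may go to $(u,b)$ and $(u,b')$ with $b\neq b'$ both in $F(u)$. It does follow when $F$ is single-valued, since then $\pi_E$ is injective on the graph.

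No repair is possible, because with $(I_2)$ read literally the statement is false. Here is a counterexample.
\begin{itemize}
\item Setup: take $Y=\mathbb{R}e_1$ with $\|e_1\|=1$, $Pu=e_1^*(u)e_1$, $k=8/\rho$, $f(u)=k\,e_1^*(u)+1$, and $F(u)=[f(u),\infty)$.
\item $(I_1)$ holds: $e_1^*$ vanishes on $Z$, so $\inf_N F=1>0$.
\item No Palais--Smale sequence at any level: the deformation $((v,b),t)\mapsto(v-te_1,\,b-kt)$ stays in $\mathit{graph}\,F$, moves points by $t\sqrt{1+k^2}$, and so gives $|dF|\ge k$ on all of $\mathit{graph}\,F$.
\item $(I_2)$ holds: let $u_0=-\frac{\rho}{2}e_1$ (so $f(u_0)=-3$), $\phi(x)=x/(1+|x|)$, $\varepsilon>0$ small, and $\Psi_2(x)=(u_0+\varepsilon\phi_1(x)e_1,\,-1+\varepsilon\phi_2(x))$. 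This is a homeomorphism onto a disc in $\mathit{graph}\,F$. The image $\Psi_2(\partial B_1^2)$ is a small circle over the flat part of $\partial M$, at heights $\le -1+\varepsilon/2<0$.
\item The level: every $\gamma\in\Gamma_2$ fixes $S_2$ and $\operatorname{id}\in\Gamma_2$, so $c=-1+\varepsilon/2$. Yet no $(PS)_c$ sequence exists.
\end{itemize}
In this example the boundary map has degree $0$ about $z$, and $D_2$ does not link $N$. So your proof, like the paper's, is valid only under the strengthened form of $(I_2)$ in (1), or for single-valued $F$.
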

	
	\begin{proof} Let $z_0 \in Z\setminus \{0\}$. Set $z:=r\frac{z_0}{\|z_0\|}$. Denote $m:= d +1$, where $d:=\dim Y$. Let $\rho_0>0$.	
		We consider, for $b\le 0$, the continuous map
		\begin{eqnarray*}
			\Psi_m : \mathbb{R}^m  &\to& \Psi_m(\mathbb{R}^m) \subset \mathbf{\mathit{graph}\,F}\\
			x=(x_1,\ldots,x_m)
			&\mapsto& \begin{cases}
				\left(\rho_0\big(\sum_{i=1}^d x_i e_i + x_m z\big), \;b\right), &\text{if}\;\; x_m\ge0,\\[0.5 em]
				\left(\rho_0\sum_{i=1}^d x_i e_i, \;b\right), &\text{if}\;\; x_m\le0.
			\end{cases}
		\end{eqnarray*}
		Then, $\Psi_m$ is a homeomorphism that satisfies the condition $(I_2)$ (with $\rho= \rho_0\|L(x)\|$).\\
		 We recall the following notations: 
		\begin{equation*}
			D_m = \Psi_m(B_1^{m}),\;\;
			S_m = \partial D_m= \Psi_m(\partial B_1^{m}).
		\end{equation*}
		
		In order to apply Theorem $\ref{theo 3.3}$, we have only to verify that \[c\ge \underset{u\in N}{\inf}\; F(u).\] Let us prove that, for every $\gamma \in \Gamma_m$, we have $\gamma(D_m)\cap (N\times \mathbb{R})\ne \emptyset$.
		 Denote by $P$ the projection of $\mathbf{\mathit{graph}\,F}$ onto $Y$ such that $P\Big(\mathbf{\mathit{graph}\,F} \cap (Z\times \mathbb{R})\Big)=\Big\{0\Big\}$, $\pi_E$ the projection of $\mathbf{\mathit{graph}\,F}$ onto $E$ and by \[R: (Y\oplus\mathbb{R}z)\setminus\{0+z\}\to \partial M\] a retraction. \\ Suppose there exists $\gamma \in \Gamma_m$ such that \[\gamma(D_m)\cap (N\times \mathbb{R})= \emptyset.\] Then, for every $(u,b)\in D_m$, 
		\begin{equation}
			\label{eq 10}
			\gamma(u,b)\notin N\times \mathbb{R}.
		\end{equation}
		Define the map 
		\begin{eqnarray*}
			\mathcal{R}_m : B_1^m &\to& Y\oplus \mathbb{R}z\setminus\{0+z\} \\
			x&\mapsto& P(\gamma(\Psi_m(x)))+ \Big\| \pi_E(\gamma(\Psi_m(x)))-P(\gamma(\Psi_m(x)))\Big\|r^{-1}z.
		\end{eqnarray*}
		The map $\mathcal{R}_m$ is well defined and continuous. Indeed, let $x\in B_1^m$. Then, there exists $(u,b)\in D_m$ such that $\Psi_m(x)=(u,b)$. Therefore,
		$P(\gamma(u,b))+ \Big\| \pi_E(\gamma(u,b))-P(\gamma(u,b))\Big\|r^{-1}z \in Y\oplus \mathbb{R}z\setminus\{0+z\}$.
		If not, suppose that $P(\gamma(u,b))=0$, and that \[\Big\| \pi_E(\gamma(u,b))-P(\gamma(u,b))\Big\|r^{-1}z=z.\] This implies that
		\[\gamma(u,b)\in \mathbf{\mathit{graph}\,F}\cap (Z\times \mathbb{R})\;\; \text{and}\;\;  \Big\|\pi_E(\gamma(u,b))\Big\| =r.\]
		In other words, \[ \pi_E(\gamma(u,b)) \in Z\;\; \text{and}\;\;	\Big\|\pi_E\Big(\gamma(u,b)\Big)\Big\|= r.\]
		It follows that \[\gamma(u,b)\in N\times \mathbb{R},\]	contradicting $(\ref{eq 10})$.
		
		Let us now consider the continuous map 
		\begin{eqnarray*}
			\Phi_{Y\oplus\mathbb{R}z} : \partial B_1^m &\to& \partial M\\
			x=(x_1,\ldots, x_d, x_m) &\mapsto&
			\begin{cases}
				\dfrac{\rho}{\|L(x)\|}L(x), &\text{if}\;\; x_m\ge0,\\[1 em]	
				\dfrac{\rho}{\|L(x)\|}P(L(x)), &\text{if}\;\; x_m\le0.
			\end{cases} 
		\end{eqnarray*}
		Then, $\Phi_{Y\oplus\mathbb{R}z}$ is a homeomorphism. We claim that the map
		\begin{eqnarray*}
			\mathcal{A}: B_1^m &\to&\partial B_1^m\\
			x &\mapsto& (\Phi_{Y\oplus\mathbb{R}z})^{-1}\circ R\circ \mathcal{R}_m(x)
		\end{eqnarray*}
		is a retraction from $B^m_1$ onto $\partial B^m_1$. \\
		Indeed, $\mathcal{A}$ is continuous. Let $x\in \partial B_1^m$. Then, there is $(u,b)\in \Psi_m(\partial B_1^m)$ such that $\Psi_m(x)=(u,b)$. Furthermore, $u\in \partial M$ (by $(I_2)$). We have  \begin{equation*}
			\mathcal{R}_m(x)= P(\Psi_m(x))+\lambda z =\begin{cases}
				\rho_0\sum_{i=1}^d x_i e_i + \lambda z = \rho_0\Big(\sum_{i=1}^d x_i e_i + \frac{\lambda}{\rho_0} z\Big), &\text{if}\;\; \lambda> 0,\\[0.5 em]
				\rho_0 \sum_{i=1}^d x_i e_i, &\text{if}\;\; \lambda=0.
			\end{cases}
		\end{equation*}
		From condition $(I_2): \Psi(\partial B_1^{m}) \subseteq M_0 \times (-\infty, 0]$ $\Big($with $\rho=\rho_0\|L(x)\| \Big)$,\\
		
		\begin{enumerate}
			\item [$(i)$] On $\partial B_1^m\cap \{x_m\ge0\}$, si $\lambda>0$, one would get
			\[\mathcal{R}_m(x)=\Phi_{Y\oplus\mathbb{R}z}(x)\cap\{x_m:=\lambda/\rho_0\ge0\}. \] 
			\item[$(ii)$] On $\partial B_1^m\cap \{x_m\le0\}$, $\|P(\Psi_m(x))\|=\rho_0\|P(L(x))\|\le \rho$. So, if $\lambda=0$, then
			\[\mathcal{R}_m(x)= P(\Psi_m(x))= \rho_0P(L(x))\cap\{x_m\le0\}=\Phi_{Y\oplus\mathbb{R}z}(x)\cap\{x_m\le0\}. \]
		\end{enumerate}
		Therefore, on $\partial B_1^m$, $\mathcal{R}_m(x)=\Phi_{Y\oplus\mathbb{R}z}(x)$. It follows that $R(\mathcal{R}_m(x))= \mathcal{R}_m(x)$.
		Hence, 
		\[\mathcal{A}(x)= (\Phi_{Y\oplus\mathbb{R}z})^{-1} (\mathcal{R}_m(x))= (\Phi_{Y\oplus\mathbb{R}z})^{-1} (\Phi_{Y\oplus\mathbb{R}z} (x))=x\;\; \text{on}\;\; \partial B_1^m.\]
		The map $\mathcal{A}$ is a retraction from $B^m_1$ onto its boundary $\partial B^m_1$. This is impossible by the Non retractability Theorem since $ \dim Y <\infty$ (see \cite[Theorem D.11]{Wi}). Hence, for every $\gamma \in \Gamma_m$, \[\gamma(D_m)\cap (N\times \mathbb{R})\ne \emptyset .\]
		By assumption $(I_2)$, we have
		\begin{equation*}
			a= {\max}\; \pi_{\mathbb{R}} (S_m)\le 0 < \underset{u\in N}{\inf}\; F(u).
		\end{equation*}
		For every $\gamma \in \Gamma_m$, there exists $\mathscr{P} = \gamma(D_m) $ such that 
		\begin{equation*}
			{\max}\; \pi_{\mathbb{R}} (\mathscr{P})\ge \max\; \pi_{\mathbb{R}}\,\Big((N\times \mathbb{R})\cap \mathscr{P}\Big)\ge \underset{u\in N}{\inf}\; F(u).
		\end{equation*}
		Therefore,
		\begin{equation*}
			c= \underset{\gamma \in \Gamma}{\inf}\;{\max}\; \pi_{\mathbb{R}} (\mathscr{P}) \ge \underset{u\in N}{\inf}\; F(u).
		\end{equation*}
	Since \[c\ge a= {\max}\; \pi_{\mathbb{R}} (S_m),\] then $F$ satisfies the relation $(\ref{eq 4})$ of Theorem $\ref{theo 3.2}$ with \[\Gamma_0^m =\Big\{\operatorname{id}\Big\} \;\; \text{and}\;\; \gamma_0(S_m)= \operatorname{id}(S_m)=S_m.\] By applying Theorem $\ref{theo 3.3}$, we obtain the existence of a sequence $(u_n,c_n)\subset \mathbf{\mathit{graph}\,F}$ satisfying $(\ref{eq 9})$
		\end{proof}

	\section{Application of Theorem $\ref{theo 4.2}$}
	In this section, by applying Theorem $\ref{theo 4.2}$, we give an existence result for the following problem:
	\begin{equation}
		\label{eq 11}
		\begin{cases}
			-\Delta u(t) +\varphi(t)u(t)\in G(u(t)), \;\; &\text{a.e.}\;\; t\in \Omega,\\
			u(t)=0, \;\; &\text{for every}\;\; t\in \partial \Omega,
		\end{cases}
	\end{equation}
	where $\Omega$ is a bounded open subset of $\mathbb{R}^{N}$ with smooth boundary $\partial \Omega$, $N\ge3$, $\varphi \in L^{N/2}(\Omega)$ and $G :\mathbb{R} \to \mathbb{R}$ is a multivalued mapping.
	
	It is important to point out that this type of problem $(\ref{eq 11})$ arise naturally in control theory and allows one to study partial differential equations whose right-side is discontinuous.\\
	
	In $H_0^1(\Omega)$, we choose the norm
	\[\|u\|^2:= \int_{\Omega} |\nabla u(t)|^2dt.\]
	\begin{definition}
		\label{def 5.1}
		A weak solution of $(\ref{eq 11})$ is a function $u\in E:= H^1_0(\Omega)$ such that there exists a measurable function $h(t)\in G(u(t))$ a.e. $t\in \Omega$ and 
		\[\int_{\Omega} \Big(\nabla u(t)\cdot
		 \nabla \psi(t) + \varphi(t)u(t) \psi(t)\Big)dt -\int_{\Omega} h(t)\psi(t)\,dt =0, \;\; \text{for every}\;\; \psi \in E.\]
	\end{definition}
	
	Here is the main result of this section. It is inspired of \cite[Theorem 2.18]{Wi} and \cite[Theorem 4.1]{Fri}.
	\begin{theorem}
		\label{theo 5.1}
	Let $G: \mathbb{R} \to \mathbb{R}$ be an upper semi-continuous multivalued mapping with 
	nonempty, compact, convex values. Assume 
	\begin{enumerate}
		\item[$(G_1)$] the function $\varphi \in L^{N/2}(\Omega)$ if $N\ge 3$, $\varphi \in L^q(\Omega)$, $q>1$, if $N=2$ and $\varphi\in L^1(\Omega)$ if $N=1$, and
		 there exist $1<\mu<\frac{N+2}{N-2}$, and constants $a$ and $b$ such that
		\[|G(x)|=\max \Big\{|z|\;\Big|\;z\in G(x)\Big\} \le a+b|x|^\mu, \;\; \text{for every}\;\; x\in \mathbb{R};\]
		\item[$(G_2)$] there exist $\beta>2$ and $R>0$ such that for every $x$ such that $|x|\ge R$,
			\begin{equation*}
		0<\beta \int_{0}^{x}G(z)dz \le \min\big(xG(x)\big)=  \underset{z\in G(x)}{\min} \{xz\}=
			\begin{cases}
				x \underset{z\in G(x)}{\min} \{z\}\;\; \text{if}\;\; x>0,\\
				x \underset{z\in G(x)}{\max} \{z\}\;\; \text{if}\;\; x<0;
			\end{cases}
		\end{equation*}  
		\item[$(G_3)$] The following inequalities are satisfied :
		\begin{enumerate}
		\item[$(1)$] 	\[\underset{x\to 0}{\limsup}\; \frac{\max\;(xG(x))}{x^2}\le 0,\] where
		\begin{equation*}
			\max\big(xG(x)\big) =
			\begin{cases}
				x \underset{z\in G(x)}{\max} \{z\}\;\; \text{if}\;\; x>0,\\
				x \underset{z\in G(x)}{\min} \{z\}\;\; \text{if}\;\; x<0;
			\end{cases}
		\end{equation*}  
		\item[$(2)$] \[\lambda_{n} \frac{x^2}{2}\le \int_{0}^{x}\min \big(G(z)\big)dz,\] where $\lambda_1<\lambda_2\le \cdots\le \lambda_{n}\le 0<\lambda_{n+1}\le \cdots$ are the sequence of eigenvalues of
		\begin{equation}
			\label{eq 12}
			\begin{cases}
				-\Delta u(t) +\varphi(t)u(t)=\lambda u(t),\\
				u(t)\in H_0^1(\Omega),
			\end{cases}
		\end{equation}
		and where each eigenvalue is repeated according to its multiplicity.
		\end{enumerate}
	\end{enumerate}
	
	Then, the problem $(\ref{eq 11})$ has a nontrivial solution.
	\end{theorem}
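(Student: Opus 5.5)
We follow the scheme of \cite[Theorem 4.1]{Fri} combined with the linking geometry of \cite[Theorem 2.18]{Wi}. On $E=H_0^1(\Omega)$ we set
\[
F(u):=\frac12\int_\Omega\big(|\nabla u|^2+\varphi u^2\big)\,dt-\Big\{\int_\Omega w(t)\,dt \;\Big|\; w \text{ measurable},\ w(t)\in \textstyle\int_0^{u(t)}G(z)\,dz \text{ a.e.}\Big\}.
\]
Here $\int_0^x G(z)\,dz$ is the Aumann integral, which is a compact interval $[\underline{g}(x),\overline{g}(x)]$ with $\underline g(x)=\int_0^x\min G$ and $\overline g(x)=\int_0^x \max G$ for $x\ge 0$ (with the obvious sign swap for $x<0$). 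Thus $F(u)$ is a compact interval. By $(G_1)$ and the Sobolev embedding $H_0^1\hookrightarrow L^{\mu+1}$, the quadratic part is continuous. The upper semicontinuity of $G$ gives lower/upper semicontinuity of $\underline g,\overline g$, and Fatou's lemma together with the growth bound then shows that $\mathit{graph}\,F$ is closed.

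\textbf{Step 1: Palais–Smale.} We show $F$ satisfies $(PS)_c$ for every $c$, as in \cite{Fri}. The first ingredient is an estimate of the weak slope from below by a Clarke-type quantity: if $|dF|(u,b)$ is small, then there is a measurable selection $h(t)\in G(u(t))$ such that $\|-\Delta u+\varphi u-h\|_{H^{-1}}$ is small. We prove this by contradiction: if the distance in $H^{-1}$ from $0$ to the convex set $\{-\Delta u+\varphi u-h\}$ is at least $\sigma$, a Hahn–Banach separating direction $v$ yields a deformation $\mathcal H((v',b'),t)=(v'+tv,\ b'-\sigma t)$ that stays in the graph, and hence $|dF|\ge\sigma$.

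Given a PS sequence $(u_n,c_n)$ with selections $h_n$, the inequality $(G_2)$ (an Ambrosetti–Rabinowitz condition) bounds
\[
c_n-\tfrac1\beta\langle -\Delta u_n+\varphi u_n-h_n,u_n\rangle \ge \big(\tfrac12-\tfrac1\beta\big)\|u_n\|^2 - C - C\|u_n\|^2_{\text{neg}}.
\]
Here the negative eigenspace is finite-dimensional and is controlled via the $L^{N/2}$ assumption on $\varphi$, as in \cite[Lemma 2.16]{Wi}. This yields boundedness of $(u_n)$. Since $\mu$ is subcritical, $h_n$ is compact in $H^{-1}$, and inverting $-\Delta$ gives strong convergence of a subsequence.

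\textbf{Step 2: linking geometry.} We take $Y=\operatorname{span}\{e_1,\dots,e_n\}$, the eigenfunctions for $\lambda_1,\dots,\lambda_n\le0$, and $Z=Y^\perp$. For $u\in Z$ the quadratic form dominates $\lambda_{n+1}\|u\|_{L^2}^2$ and also $c\|u\|^2$. Condition $(G_3)(1)$ together with $(G_1)$ gives $\max\int_0^x G\le \epsilon x^2+C|x|^{\mu+1}$. Hence on $N=\{u\in Z\mid \|u\|=r\}$ with $r$ small, $\min F(u)\ge \alpha>0$, which is $(I_1)$.

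For $(I_2)$ we need $\max F\le 0$ on $\partial M$. On $Y$, $(G_3)(2)$ gives
\[
\max F(y)\le \tfrac12\int(|\nabla y|^2+\varphi y^2)-\tfrac{\lambda_n}{2}\|y\|_2^2\le 0.
\]
On $\|y+\lambda z\|=\rho$ with $\rho$ large, $(G_2)$ gives $\int_0^xG\ge c|x|^\beta-C$ with $\beta>2$. Since norms are equivalent on the finite-dimensional space $Y\oplus\mathbb Rz$, this forces $\max F\le 0$. We then choose $\Psi_m$ to parametrize $\partial M$ paired with a value $b$ obtained by continuously selecting $b(u)=\max F(u)\le 0$; this selection is continuous because the endpoint functionals are continuous on finite-dimensional sets by dominated convergence. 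Extending to a homeomorphism from $\mathbb R^m$ as in the proof of Theorem~\ref{theo 4.2}, that theorem yields a $(PS)_c$ sequence with $c\ge\alpha>0$.

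\textbf{Step 3: conclusion.} By Step 1 the sequence converges to $u$ with $c\in F(u)$ and $|dF|(u,c)=0$. The slope estimate then provides $h(t)\in G(u(t))$ solving the weak equation of Definition~\ref{def 5.1}; here we use the closedness of the set of selections, which follows from the convex compact values and the upper semicontinuity of $G$. Since $F(0)=\{0\}$ and $c>0$, we have $u\neq0$.

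The main obstacle is Step 1's link between a small weak slope and the existence of an almost-solving selection. Closedness of the graph is the other delicate point, because the endpoint functions $\overline g,\underline g$ are only semicontinuous.
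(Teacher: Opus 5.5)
Your outline follows the paper's route: a Frigon-type multivalued functional, a lower bound for $|dF|$ by $\operatorname{dist}(0,\mathscr A(u))$ via an explicit deformation, Ambrosetti--Rabinowitz boundedness, compactness, and Theorem~\ref{theo 4.2}. Replacing the paper's KKM lemma by a minimal-norm/Hahn--Banach separation is fine. The genuine gap is that the step you flag as the main obstacle fails for the functional you chose. Your $F(u)$ is a compact interval $[f_1(u),f_2(u)]$ with $f_1=I_{g^*}$, so $\mathit{graph}\,F$ has an upper boundary $b=f_2(u)$. Hence $\mathcal H((v',b'),t)=(v'+tv,\,b'-\sigma t)$ stays in the graph only if also $f_2(v'+tv)\ge b'-\sigma t$. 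At points with $b'=f_2(v')$ this forces $f_2$ to decrease along $v$ at rate at most $\sigma$, whereas $v$ is chosen precisely so that the functional decreases at rate more than $\sigma$. Concretely, take $G=\{g\}$ single-valued and continuous (allowed by the hypotheses, e.g.\ the Example with $\alpha_1=\alpha_2$). Then your $F$ is $\{I_g(u)\}$, and your $\mathcal H$ would require $I_g(v'+tv)=I_g(v')-\sigma t$ for all $v'$ near $u$ and all small $t$, which is false. So the estimate ``small $|dF|$ implies an almost-solving selection'' is not established, and both your $(PS)_c$ argument and the identification of the limit as a solution rest on it.

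The paper avoids this by taking $F(u)=\{c\mid I_g(u)\le c \text{ for some } g\in\mathcal S(G)\}$. This set is upward closed (it equals $[I_{g^*}(u),+\infty)$), so the graph has no upper boundary. The point $(x-sw,\,b-s(\|\alpha\|_*-2\varepsilon))$ then belongs to the graph as soon as some admissible $I_g$ at $x-sw$ lies below the new level. Even then the decrease must hold uniformly for all $(x,b)$ in a neighbourhood of $(u,c)$. That is $(\ref{eq 15})$, proved by contradiction using the upper semicontinuity of $G$; your sketch separates only at the point $u$ and does not address it. With the upward-closed $F$, the rest of your plan goes through after the following corrections.

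\begin{itemize}
\item Nontriviality must come from $|dF|(0,c)=+\infty$ for $c>0=\min F(0)$ (lower $b$ without moving $u$), since now $c\in F(0)$.
\item In $(I_2)$, take $b(u)=\min F(u)$, not $\max F(u)$. Condition $(I_2)$ only needs some value $\le 0$ over $\partial M$, and $\max F\le 0$ on $Y$ does not follow from $(G_3)(2)$. For $x<0$ one has $\min_{g\in\mathcal S(G)}\int_0^x g=\int_0^x\max G(z)\,dz\le\int_0^x\min G(z)\,dz$, so the hypothesis bounds the wrong primitive. For the lower endpoint it gives exactly $I_{g^*}\le 0$ on $Y$.
\item In the boundedness step, the finite-dimensional negative part has to be absorbed by a superquadratic $\|u_n\|_{L^\beta(\Omega)}^\beta$ term, not by the $L^{N/2}$ hypothesis. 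Your display with $\tau=1/\beta$ does not contain that term; take $\tau\in(1/\beta,1/2)$ as in Lemma~\ref{lem 5.7}.
\end{itemize}
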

	\begin{example}
		The set of functions $G$ that satisfies the conditions $(G_1)- (G_3)$ is nonempty. Choose $2<\alpha< \frac{2N}{N-2}$, $0<\alpha_1 \le \alpha_2$. Define
		\begin{equation*}
			G(x)=
			\begin{cases}
			[ \alpha_1 |x|^{\alpha-1},  \alpha_2 |x|^{\alpha-1}], \;\; &x\ge 0,\\
			[ -\alpha_2 |x|^{\alpha-1},  -\alpha_1 |x|^{\alpha-1}], \;\; &x< 0.
			\end{cases}
		\end{equation*}
		Then $G$ satisfies conditions $(G_1)- (G_3)$.
	\end{example}
	To prepare for the proof of Theorem $\ref{theo 5.1}$, we establish some lemmas.
	\begin{lemma}[Lemma 2.13, \cite{Wi}]
		\label{lem 5.1}
		If $N\ge3$ and $\varphi \in L^{N/2}(\Omega)$, the functional
		\[\mathscr{X}: H_0^1(\Omega)\to \mathbb{R} : u\mapsto \int_{\Omega} \varphi(t)u^2(t)\,dt \]
		is weakly continuous.
	\end{lemma}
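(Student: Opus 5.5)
To prove Lemma~\ref{lem 5.1}, I will show that weak convergence $u_n \rightharpoonup u$ in $H_0^1(\Omega)$ implies $\mathscr{X}(u_n)\to\mathscr{X}(u)$. The key facts are the Sobolev embedding $H_0^1(\Omega)\hookrightarrow L^{2^*}(\Omega)$ with $2^*=\frac{2N}{N-2}$ (continuous, not compact), and the Rellich compact embedding into $L^2(\Omega)$. Since the embedding into $L^{2^*}$ is not compact, I will not pass to the limit directly. Instead I will approximate $\varphi$ by bounded functions and use Hölder's inequality with the exponents $\frac{N}{2}$ and $\frac{N}{N-2}$, which are conjugate because $\frac{2}{N}+\frac{N-2}{N}=1$.

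Let $u_n\rightharpoonup u$ in $H_0^1(\Omega)$. Weakly convergent sequences are bounded, so $\|u_n\|\le C$; by Sobolev, $|u_n|_{2^*}\le C'$ and also $|u|_{2^*}\le C'$. Fix $\varepsilon>0$. Since $L^\infty(\Omega)$ is dense in $L^{N/2}(\Omega)$ (for instance via the truncations $\varphi_k=\max(-k,\min(\varphi,k))$ and dominated convergence), choose $\psi\in L^\infty(\Omega)$ with $|\varphi-\psi|_{N/2}<\varepsilon$. Split
\[
\mathscr{X}(u_n)-\mathscr{X}(u)=\int_\Omega(\varphi-\psi)(u_n^2-u^2)\,dt+\int_\Omega\psi\,(u_n^2-u^2)\,dt .
\]

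For the first term, Hölder's inequality gives
\[
\Big|\int_\Omega(\varphi-\psi)(u_n^2-u^2)\,dt\Big|\le|\varphi-\psi|_{N/2}\big(|u_n|_{2^*}^2+|u|_{2^*}^2\big)\le 2C'^2\varepsilon ,
\]
because $\|u_n^2\|_{L^{N/(N-2)}}=|u_n|_{2^*}^2$.

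For the second term, Rellich's theorem (valid since $\Omega$ is bounded) gives $u_n\to u$ strongly in $L^2(\Omega)$. Then
\[
\Big|\int_\Omega\psi(u_n^2-u^2)\,dt\Big|\le|\psi|_\infty\,|u_n-u|_2\,|u_n+u|_2\to0,
\]
where $|u_n+u|_2$ stays bounded by the bounds above.

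Combining the two estimates, $\limsup_n|\mathscr{X}(u_n)-\mathscr{X}(u)|\le 2C'^2\varepsilon$, and since $\varepsilon$ is arbitrary, $\mathscr{X}(u_n)\to\mathscr{X}(u)$. The only delicate point is the failure of compactness at the critical exponent $2^*$, and the density argument above handles it: the small $L^{N/2}$ part of $\varphi$ is controlled uniformly through the Sobolev bound, while the bounded part is handled by compactness in $L^2$. The argument applies to sequences, which suffices for weak continuity in the sequential sense used in the paper.
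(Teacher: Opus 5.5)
Your proof is correct, but it follows a different route from the one behind the citation. The paper gives no argument here; it simply quotes Lemma 2.13 of \cite{Wi}. The reasoning there is as follows. Pass to a subsequence with $u_n\to u$ a.e.\ on $\Omega$. The Sobolev inequality makes $(u_n^2)$ bounded in $L^{N/(N-2)}(\Omega)$, and a bounded, a.e.\ convergent sequence in $L^p$, $1<p<\infty$, converges weakly to its pointwise limit, so $u_n^2\rightharpoonup u^2$ in $L^{N/(N-2)}(\Omega)$. Testing against $\varphi\in L^{N/2}(\Omega)=\big(L^{N/(N-2)}(\Omega)\big)^*$ gives $\mathscr{X}(u_n)\to\mathscr{X}(u)$ along the subsequence, and the subsequence principle finishes. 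You instead split $\varphi$ into a bounded part and an $L^{N/2}$-small remainder. The remainder is controlled uniformly by H\"older plus the Sobolev bound, and the bounded part by strong $L^2$ convergence from Rellich. Your route is a self-contained $\varepsilon$-argument: it needs neither the `bounded plus a.e.\ convergent implies weakly convergent' lemma nor any subsequence extraction, and it makes explicit why the loss of compactness at $2^*$ is harmless. The cited argument is shorter and isolates the mechanism as weak convergence of $u_n^2$ in $L^{N/(N-2)}(\Omega)$. It also works verbatim on any open set, since a.e.\ convergence of a subsequence needs only local compactness. Yours uses boundedness of $\Omega$ through the global Rellich theorem, which is fine in the paper's setting; on an unbounded domain you would choose $\psi$ bounded with bounded support and apply Rellich on that support. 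Your sequential reading of `weakly continuous' is also the right one, since such quadratic functionals are in general not continuous for the weak topology on the whole space.
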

	\begin{lemma}[Lemma 2.14, \cite{Wi}]
		\label{lem 5.2}
		If $|\Omega|<\infty$, $N\ge 3$ and $\varphi\in L^{N/2}(\Omega)$, then 
		\[\lambda_1:= \underset{\underset{\|u\|_{L^2(\Omega)}=1}{u\in H^1_0(\Omega)}}{\inf}\; \int_{\Omega} \Big(|\nabla u(t)|^2 +\varphi(t)u^2(t)\Big)dt >-\infty.\]
	\end{lemma}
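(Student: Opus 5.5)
The plan is to reduce to a uniform estimate of the form
\[
\Big|\int_\Omega \varphi(t)u^2(t)\,dt\Big| \le \tfrac12\|u\|^2 + C\|u\|_{L^2(\Omega)}^2 \quad \text{for all } u\in H_0^1(\Omega),
\]
with $C$ depending only on $\varphi$ and $\Omega$. Granting this, for $\|u\|_{L^2(\Omega)}=1$ we get
\[
\int_\Omega \big(|\nabla u|^2+\varphi u^2\big)\,dt \ge \|u\|^2-\tfrac12\|u\|^2-C\ge -C,
\]
so $\lambda_1\ge -C>-\infty$.

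\textbf{Step 1: split $\varphi$.} For $M>0$ write $\varphi=\varphi_1+\varphi_2$ with $\varphi_1=\varphi\,\mathbf 1_{\{|\varphi|>M\}}$ and $\varphi_2=\varphi\,\mathbf 1_{\{|\varphi|\le M\}}$. Since $\varphi\in L^{N/2}(\Omega)$, dominated convergence gives $\|\varphi_1\|_{L^{N/2}}\to 0$ as $M\to\infty$. The bounded part is handled trivially:
\[
\int_\Omega|\varphi_2|u^2 \le M\|u\|_{L^2}^2 .
\]

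\textbf{Step 2: Hölder and Sobolev on the singular part.} Hölder with exponents $N/2$ and $N/(N-2)$ gives
\[
\int_\Omega|\varphi_1|u^2\le \|\varphi_1\|_{L^{N/2}}\|u\|_{L^{2^*}}^2, \qquad 2^*=\tfrac{2N}{N-2}.
\]
The Sobolev embedding, valid since $N\ge3$, gives $\|u\|_{L^{2^*}}\le S\|u\|$. Choose $M$ so large that $S^2\|\varphi_1\|_{L^{N/2}}\le \tfrac12$. Then $\int_\Omega|\varphi_1|u^2\le\tfrac12\|u\|^2$, and the estimate holds with $C=M$.

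The only delicate point is the smallness of $\|\varphi_1\|_{L^{N/2}}$. It follows from absolute continuity of the integral: $|\varphi|^{N/2}$ is integrable and $|\{|\varphi|>M\}|\to0$. This is also where finiteness of $|\Omega|$ is implicitly harmless, though it is not essential for this step. Lemma~\ref{lem 5.1} is not needed.
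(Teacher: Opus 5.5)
Your proof is correct and is essentially the standard argument for this result. The paper gives no proof of its own and simply cites \cite{Wi}, where $\varphi$ is likewise split into a part with small $L^{N/2}$ norm plus an $L^\infty$ part, and H\"older together with the Sobolev inequality absorbs the first part into $\tfrac12\|u\|^2$. Your remark that $|\Omega|<\infty$ is not actually needed for this lower bound is also accurate.
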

	\begin{lemma}[Lemma 2.15, \cite{Wi}]
		\label{lem 5.3}
	Let $e_1$, $e_2$, $e_3$, $\cdots$ be the corresponding orthonormal eigenfunctions in $L^2(\Omega)$ of the eigenvalues $\lambda_1$, $\lambda_2$, $\cdots$ in $(\ref{eq 12})$.	Under the assumptions of Lemma $\ref{lem 5.1}$, if
		\[Y:=\spana \;(e_1,\cdots, e_{n}), \quad Z:= \Big\{  u\in H^1_0(\Omega)\;\Big|\; \int_{\Omega}u(t)v(t)\,dt=0, \; v\in Y\Big\},\] then
		\[\zeta:= \underset{\underset{\|\nabla u\|_{L^2(\Omega)}=1}{u\in Z}}{\inf}\;\int_{\Omega} \Big(|\nabla u(t)|^2 +\varphi(t)u^2(t)\Big)dt>0. \]
	\end{lemma}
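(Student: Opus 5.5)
We argue by contradiction, using the spectral decomposition of the operator $-\Delta+\varphi$ together with the weak continuity from Lemma \ref{lem 5.1}. Write $q(u):=\int_\Omega \big(|\nabla u|^2+\varphi u^2\big)\,dt$; by Lemma \ref{lem 5.2} this is a quadratic form bounded below on the $L^2$ unit sphere.

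Suppose $\zeta\le 0$.

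\textbf{Step 1: a minimizing sequence and its weak limit.} Choose $u_k\in Z$ with $\|\nabla u_k\|_{L^2}=1$ and $q(u_k)\to\zeta$. The sequence $(u_k)$ is bounded in $H_0^1(\Omega)$, so a subsequence converges weakly in $H_0^1(\Omega)$ to some $u\in Z$. Here $Z$ is weakly closed, being the intersection of kernels of the continuous linear functionals $u\mapsto\int_\Omega u e_i\,dt$. By Lemma \ref{lem 5.1},
\[\int_\Omega \varphi u_k^2\,dt\to\int_\Omega\varphi u^2\,dt.\]
Hence
\[\zeta=1+\int_\Omega\varphi u^2\,dt,\]
and $\zeta\le 0$ forces $\int_\Omega\varphi u^2\,dt\le -1$. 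In particular $u\neq 0$.

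\textbf{Step 2: lower bound via weak lower semicontinuity.} Since $\|\nabla u\|_{L^2}\le 1$,
\[q(u)=\|\nabla u\|^2+\int_\Omega\varphi u^2\,dt\le 1+\int_\Omega\varphi u^2\,dt=\zeta\le 0.\]

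\textbf{Step 3: the spectral lower bound on $Z$.} Because $u\in Z$ is $L^2$-orthogonal to $e_1,\dots,e_n$, the variational (Courant--Fischer) characterization of the eigenvalues of \eqref{eq 12} gives
\[q(u)\ge\lambda_{n+1}\|u\|_{L^2}^2>0,\]
since $\lambda_{n+1}>0$ and $u\ne0$. This contradicts Step 2, so $\zeta>0$.

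\textbf{Main obstacle.} The key step is Step 3: justifying the min-max characterization $\lambda_{n+1}=\inf\{q(v): v\in Z,\ \|v\|_{L^2}=1\}$. To obtain it, I would show that $-\Delta+\varphi$ (shifted by a constant exceeding $-\lambda_1$) has compact inverse on $L^2(\Omega)$. This follows from Lemma \ref{lem 5.2}, Lemma \ref{lem 5.1}, and the compactness of the embedding $H_0^1\hookrightarrow L^2$. The spectral theorem for compact self-adjoint operators then yields an orthonormal eigenbasis $(e_j)$. Expanding $v=\sum_{j>n}\langle v,e_j\rangle e_j$ gives $q(v)=\sum_{j>n}\lambda_j\langle v,e_j\rangle^2\ge\lambda_{n+1}\|v\|_{L^2}^2$, which is exactly the bound needed in Step 3.
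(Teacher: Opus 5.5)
Your argument is correct and is essentially the standard proof of \cite[Lemma 2.15]{Wi}, which the paper quotes without reproducing. That proof also takes a minimizing sequence in $Z$ on $\|\nabla u\|_{L^2(\Omega)}=1$, passes to a weak limit $u\in Z$ with $\int_\Omega\varphi u^2\,dt\le -1$ by Lemma \ref{lem 5.1} (so $u\neq 0$), uses weak lower semicontinuity to get $q(u)\le 0$, and contradicts this with $q(u)\ge\lambda_{n+1}\|u\|_{L^2(\Omega)}^2>0$ on $Z$. The only difference is cosmetic: you assume $\zeta\le 0$ directly rather than first noting $\zeta\ge 0$ and then excluding $\zeta=0$, and the finiteness of $\zeta$ then follows automatically from the weak-continuity limit.
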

	\begin{remark}
		\label{rm 3}
		As a consequence of Lemmas $\ref{lem 5.1}$ and $\ref{lem 5.2}$, if $u=u_Y+u_Z$, $u_Y\in Y$ and $u_Z\in Z$, then
		\[A(u_Y) := \int_{\Omega} \Big(|\nabla u_Y(x)|^2 +\varphi(x)u_{Y}^2(x)\Big)dx= \sum_{i=1}^{n}\lambda_i\|u_Y\|^2_{L^2(\Omega)},\]
		\[A(u) := \int_{\Omega} \Big(|\nabla u(x)|^2 +\varphi(x)u^2(x)\Big)dx\ge \lambda_1\|u_Y\|^2_{L^2(\Omega)} +\zeta \|u_Z\|^2.\]
	\end{remark}
We introduce some notation that will be used throughout this section:
\begin{equation*}
	\mathcal{S}(G) = \Big\{g: \mathbb{R} \to \mathbb{R} \;|\; g\; \text{is measurable, and}\; g(x)\in G(x)\; \text{a.e.}\; x\in \mathbb{R}\Big\},
\end{equation*}
The set $\mathcal{S}(G)$ is nonempty by the Ryll-Nardzewski measurable selection theorem.
\begin{equation*}
	\mathcal{S}(u) =\Big\{h: \Omega\to \mathbb{R} \;|\; h\;\: \text{is measurable, and}\;\: h(t)\in G(u(t))\; \text{a.e.}\; t\in \Omega\Big\}.
\end{equation*}
It is well known that the set $\mathcal{S}(u)$ is nonempty.

We denote
\begin{equation*}
	\underline{g}(x) :=\min \{G(x)\},\quad \overline{g}(x) :=\max \{G(x)\}.
\end{equation*}

For each $g\in \mathcal{S}(G)$, we define the functional
\begin{eqnarray*}
	I_g:H_0^1(\Omega) &\to& \mathbb{R}\\
	u&\mapsto& \frac{1}{2}\int_{\Omega} \Big(|\nabla u(t)|^2 + \varphi(t) u^2(t) \Big)dt -\int_{\Omega}\Big( \int_{0}^{u(t)} g(x)dx \Big)dt.
\end{eqnarray*}
	By assumption $(G_1)$, the functional $I_g$ is well defined and continuous.
	
	Let us define the multivalued functional
	\begin{eqnarray}
		\label{eq 13}
		F: H_0^1(\Omega) &\to& \mathbb{R}\nonumber\\
		u &\mapsto& \Big\{c\in \mathbb{R} \;|\; I_g(u)\le c\; \;\text{for some}\; g\in \mathcal{S}(G)\Big\}.
	\end{eqnarray}
	
	In the sequel, we denote $E:=H_0^1(\Omega)$.
	\begin{lemma}
		\label{lem 5.4}
Under assumption $(G_1)$, the functional $F$ defined in $(\ref{eq 13})$ has 
	closed graph. 
	\end{lemma}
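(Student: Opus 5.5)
The approach is sequential closedness: take $(u_n,c_n)\in\mathit{graph}\,F$ with $u_n\to u$ in $E$ and $c_n\to c$, and show $c\in F(u)$.

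\textbf{Step 1 (reduction).} By definition, $c_n\in F(u_n)$ means there is $g_n\in\mathcal{S}(G)$ with $I_{g_n}(u_n)\le c_n$. Since $F(u)$ is an upper set, it suffices to produce $g\in\mathcal{S}(G)$ with
\[I_g(u)\le\liminf_n I_{g_n}(u_n).\]

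\textbf{Step 2 (the quadratic part).} The quadratic part $\frac12\int(|\nabla u|^2+\varphi u^2)$ is continuous on $E$. For the gradient term this is clear. For the $\varphi$ term, Hölder's inequality and the Sobolev embedding $E\hookrightarrow L^{2^*}$ give
\[\Big|\int\varphi(u_n^2-u^2)\Big|\le\|\varphi\|_{N/2}\,\|u_n-u\|_{2^*}\,\|u_n+u\|_{2^*}\to0,\]
so this term converges along the sequence. It therefore remains to show
\[\limsup_n\int_\Omega\int_0^{u_n}g_n\le\int_\Omega\int_0^{u}g\]
for a suitable $g$.

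\textbf{Step 3 (bypassing the selection).} Rather than track the $g_n$, I would bound everything by the extremal selections. Set
\[\Phi(s):=\int_0^s\overline g \ \text{ for } s\ge0,\qquad \Phi(s):=\int_0^s\underline g \ \text{ for } s<0.\]
Then $\int_0^s h\le\Phi(s)$ for every $h\in\mathcal{S}(G)$ and every real $s$. The functions $\overline g,\underline g$ are measurable, since $G$ is u.s.c. with compact values, which makes $\overline g$ u.s.c. and $\underline g$ l.s.c. By $(G_1)$ they are locally bounded, so $\Phi$ is continuous with $|\Phi(s)|\le a|s|+b|s|^{\mu+1}$. The selection $g^*:=\overline g\,\mathbf 1_{[0,\infty)}+\underline g\,\mathbf 1_{(-\infty,0)}$ belongs to $\mathcal{S}(G)$ and satisfies $\int_0^s g^*=\Phi(s)$ for all $s$. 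Hence
\[I_{g_n}(u_n)\ge I_{g^*}(u_n).\]
Since $\mu+1<2^*$, the Nemytskii operator $u\mapsto\int_\Omega\Phi(u)$ is continuous on $L^{\mu+1}$, hence on $E$ by the Sobolev embedding. Thus $I_{g^*}(u_n)\to I_{g^*}(u)$, and Step 1 holds with $g=g^*$. This gives $c\ge I_{g^*}(u)$, i.e. $c\in F(u)$.

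\textbf{Main obstacle.} The delicate step is justifying that $\Phi$ is measurable and continuous with the right growth, together with the Nemytskii continuity. The potential trouble is that $G$ is multivalued at discontinuity points. However, those points are where $\overline g\neq\underline g$, and integration is insensitive to their values, while the growth bound from $(G_1)$ gives dominated convergence along an a.e. convergent subsequence. I would first extract a subsequence with $u_n\to u$ a.e. and dominated in $L^{2^*}$, then apply Vitali's or Lebesgue's theorem. Since every subsequence has a further subsequence with the same limit, the full sequence converges.
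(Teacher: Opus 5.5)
Your proof is correct, and it takes a genuinely different and shorter route than the paper.

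\textbf{The paper's route.} The paper keeps track of the selections. It bounds $(g_n)$ in $L^r([-k,k])$ using compactness of $G([-k,k])$ and extracts a weak limit $g$ in $L^r_{\mathrm{loc}}(\mathbb{R})$. It then uses Mazur's lemma together with upper semicontinuity and \emph{convexity} of the values of $G$ to get $g\in\mathcal{S}(G)$. Finally it passes to the limit in $\int_\Omega\int_0^{u_n(t)}g_n$ by a.e.\ convergence and dominated convergence, obtaining $I_g(u)=\lim I_{g_n}(u_n)\le c$.

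\textbf{Your route.} You notice that $\int_0^s h\le\int_0^s g^*$ for every $h\in\mathcal{S}(G)$ and every $s\in\mathbb{R}$. Hence $F(u)=[I_{g^*}(u),\infty)$, and $\mathit{graph}\,F$ is just the epigraph of the single continuous functional $I_{g^*}$. This is the same selection $g^*$ the paper itself introduces in Lemma~\ref{lem 5.8}.

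\textbf{Comparison.} Your argument needs no weak compactness, no Mazur argument and no convexity of the values. Compactness of the values and upper semicontinuity only serve to make $\overline g,\underline g$ attained and Borel. It also gives a sharper description of $F$. In exchange, the paper's argument does not depend on the monotone (epigraph) form of $(\ref{eq 13})$. It produces a limit selection with \emph{equality} $I_g(u)=\lim I_{g_n}(u_n)$, so it would also handle $u\mapsto\{I_g(u)\,:\,g\in\mathcal{S}(G)\}$. The same limiting-selection argument is reused in Version~2 of the proof of Theorem~\ref{theo 5.1} to obtain $h\in\mathcal{S}(u)$.

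\textbf{A false side remark.} In your last paragraph you say that the set where $\overline g\neq\underline g$ is negligible for integration. This is false: that set need not be null. In the paper's Example with $\alpha_1<\alpha_2$, it is all of $\mathbb{R}\setminus\{0\}$. You do not need this remark, so drop it. $\Phi$ is continuous simply as an indefinite integral of locally bounded Borel functions. The bound $|\Phi(s)|\le a|s|+\frac{b}{\mu+1}|s|^{\mu+1}$ then gives continuity of $u\mapsto\int_\Omega\Phi(u)$ on $L^{\mu+1}(\Omega)$.
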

	\begin{proof}
		Let $\Big\{(u_n, c_n)\Big\} \subset \mathbf{\mathit{graph}\,F}:=\Big\{(v,b)\in E\times \mathbb{R}\;|\; b\in F(v)\Big\}$, be a sequence such that $c_n \to c\in \mathbb{R}$, and $u_n \to u \in E$. We want to show that $(u,c)\in \mathbf{\mathit{graph}\,F}$.\\
		There exists $\{g_n\}\subset\mathcal{S}(G)$ such that $I_{g_n}(u_n)\le c_n$. Since $G$  is upper semi-continuous with compact values, we have that $G([-k,k])$ is compact (see \cite[Theorem 1.2.15]{BGMO}) for every $k>0$. Since $g_n(x)\in G(x)$ a.e. $x\in \mathbb{R}$, then for a.e. every $x\in [-k,k]$, $|g_n(x)|<\infty$. Therefore, for $r\in (1,+\infty)$ fixed, we have
		\begin{equation*}
			\|g_n\|_{L^r([-k,k])}^{r}= \int_{-k}^{k}|g_n(x)|^r dx < \infty.
		\end{equation*}
		It follows that $\{g_n\}$ is bounded in $L^r([-k,k])$, for every $k>0$. Since  $L^r([-k,k])$ is a reflexive space, there exist $g \in L^r_{loc}(\mathbb{R})$ and a subsequence of $\{g_n\}$, which we still denote by $\{g_n\}$, such that
		\[ g_n \rightharpoonup g \;\; \text{in} \;\; L^r([-k,k]), \;\; \text{for all} \;\; k>0.\]
		 Since every convex subset is weakly closed if and only if it is strongly closed, we also have
		\begin{equation*}
			g(x)\in \overline{\text{conv}}\;\{g_n(x), g_{n+1}(x),\cdots\} \;\; a.e.\;\; x\in \mathbb{R},
		\end{equation*}
		where \[\overline{\text{conv}}\;\{g_n(x), g_{n+1}(x),\cdots\},\] denotes the closure of the convex hull of $\{g_n(x), g_{n+1}(x),\cdots\}$; that is, the intersection of all convex sets containing $g_n(x), g_{n+1}(x), \cdots$. This with the fact that $\{g_n\}\subset \mathcal{S}(G)$ and $G$ is semi-continuous with compact, convex values imply that $g(x)\in G(x)$ a.e. $x\in \mathbb{R}$.
		
		By the Sobolev embedding theorem, \[u_n\to u\;\; \text{in}\;\; L^p(\Omega),\;\; \text{for every}\;\; 2\le p <\infty.\]  Up to a subsequence, we can assume that
		\begin{equation*}
			u_n(t) \to u(t), \;\; \text{a.e.}, \;\; \text{on}\;\; \Omega, \;\; \text{whenever}\;\; n \to \infty.
		\end{equation*}
		
Let $f \in \mathcal{S}(G)$, by assumption $(G_1)$, for every compact $K\subset \mathbb{R}$, we have 
\[\int_{K}|f(x)|dx \le \int_{K} \Big(a +b|x|^\mu\Big) dx< \infty,\] that is, $f \in L^1_{loc}(\mathbb{R})$. Then, for fixed $y_0\in \mathbb{R}$, we know that the function 
\[\tau \mapsto \int_{y_0}^{\tau}f(x)dx\;\; \text{is continuous, for every}\;\; \tau\in \mathbb{R}.\]
	Hence, by continuity and the fact that $g_n \rightharpoonup g$ in $L^r([-k,k])$ for all $k>0$, we have
		\begin{equation*}
			\int_{0}^{u_n(t)} g_n(x)dx \to \int_{0}^{u(t)} g_n(x)dx \to \int_{0}^{u(t)}g(x)dx, \;\; \text{a.e.}, \;\; \text{on}\;\; \Omega.
		\end{equation*}
	The Lebesgue convergence dominated theorem implies that 
		\begin{equation*}
			\int_{\Omega}\Big(\int_{0}^{u_n(t)} g_n(x)dx\Big)dt \to \int_{\Omega}\Big(\int_{0}^{u(t)}g(x)dx\Big)dt.
		\end{equation*}
		Thus,
		\begin{equation*}
			I_g(u)= \lim_{n\to \infty} I_{g_n}(u_n)\le \lim\limits_{n\to \infty} c_n=c,
		\end{equation*}
that is, $(u,c)\in \mathbf{\mathit{graph}\,F}$.
	\end{proof}
	
		For $u\in E$, define 
	\begin{multline*}
		\mathscr{A}(u):= \Big\{\alpha\in E^*\;|\; \exists h\in \mathcal{S}
		(u),\; \text{such that}\;\\
		\langle \alpha, w\rangle = \int_{\Omega} \Big(\nabla u(t)\cdot \nabla w(t) +\varphi(t) u(t)w(t)\Big)dt - \int_{\Omega} \big(h(t)w(t)\big)dt,\;\; \text{for every}\;\; w\in E \Big\},
	\end{multline*}
	where $\Big(E^*, \|\cdot\|_*\Big)$, is the topological dual of $E$ and $\langle \cdot, \cdot \rangle$, denotes the duality pairing. 
	
	Observe that, if $0\in \mathscr{A}(u)$, then $u$ is a weak solution of the problem $(\ref{eq 11})$ (see Definition $\ref{def 5.1}$).\\
	We also have $\mathscr{A}(u)\ne \emptyset$. Indeed, let $h\in \mathcal{S}(u)$. Then, the map
	\begin{equation*}
		L(w):= \int_{\Omega} \Big(\nabla u(t) \cdot\nabla w(t) +\varphi(t)u(t)w(t)\Big)dt - \int_{\Omega} \big(h(t)w(t)\big)dt, \; w \in E,
	\end{equation*}
	is well defined, linear and continuous. So, for $\alpha \in E^*$, we have
	\begin{equation*}
		L(w)=\langle \alpha, w\rangle.
	\end{equation*}
	
	\begin{lemma}
		\label{lem 5.5 }
		Let $u\in E$. Then, there exist $\alpha\in \mathscr{A}(u)$ and $w\in E$ with $\|w\|\le 1$ such that
		\begin{equation*}
			\|\alpha\|_* =\langle \alpha, w\rangle \le \langle \overline{\alpha}, w\rangle, \;\; \text{for every}\;\;  \overline{\alpha}\in \mathscr{A}(u).
		\end{equation*}
	\end{lemma}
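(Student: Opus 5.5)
The plan is to realize $\alpha$ as the element of minimal norm in $\mathscr{A}(u)$ and to take $w$ as the normalized Riesz representative of $\alpha$. Since $E=H_0^1(\Omega)$ with the norm $\|u\|^2=\int_\Omega|\nabla u|^2$ is a Hilbert space, $E^*$ is a Hilbert space too, and the Riesz isometry $J:E^*\to E$ satisfies $\langle \beta, J\alpha\rangle=(\beta,\alpha)_*$. So the statement reduces to the projection of $0$ onto a nonempty closed convex subset of a Hilbert space, together with the associated variational inequality.

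\textbf{Step 1: $\mathscr{A}(u)$ is nonempty and convex.} Nonemptiness was observed just before the lemma. Write $\mathscr{A}(u)=\ell_u-\iota(\mathcal{S}(u))$, where $\langle \ell_u,w\rangle=\int_\Omega(\nabla u\cdot\nabla w+\varphi uw)\,dt$ and $\langle\iota(h),w\rangle=\int_\Omega hw\,dt$. Since $G$ has convex values, a convex combination of two measurable selections of $G(u(\cdot))$ is again a measurable selection. Hence $\mathcal{S}(u)$ is convex, and so is $\mathscr{A}(u)$, because $\iota$ is linear.

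\textbf{Step 2: $\mathscr{A}(u)$ is closed; this is the main obstacle.} Let $q=\frac{2N}{N+2}$, the conjugate exponent of $2^*=\frac{2N}{N-2}$ (for $N\ge3$). By $(G_1)$, every $h\in\mathcal{S}(u)$ satisfies $|h|\le a+b|u|^\mu$. Since $\mu q\le 2^*$ (this is exactly $\mu\le\frac{N+2}{N-2}$) and $u\in L^{2^*}(\Omega)$ by Sobolev, the set $\mathcal{S}(u)$ is bounded in $L^q(\Omega)$. It is convex, and it is strongly closed: an $L^q$-limit of selections has an a.e. convergent subsequence, and $G(u(t))$ is closed. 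Hence $\mathcal{S}(u)$ is weakly closed in the reflexive space $L^q(\Omega)$, and therefore weakly compact. The map $\iota:L^q(\Omega)\to E^*$ is linear and bounded, by Hölder and the embedding $E\hookrightarrow L^{2^*}$, so it is weak-to-weak continuous. Thus $\iota(\mathcal{S}(u))$ is weakly compact, and $\mathscr{A}(u)$ is weakly compact in $E^*$, in particular closed. Since the norm is weakly lower semicontinuous, it attains its minimum on $\mathscr{A}(u)$ at some $\alpha$.

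\textbf{Step 3: choose $w$ and prove the inequality.} If $\alpha=0$, take $w=0$; both sides of the claim vanish. Otherwise set $w=J\alpha/\|\alpha\|_*$. Then $\|w\|=1$ and $\langle\alpha,w\rangle=\|\alpha\|_*$. For $\overline{\alpha}\in\mathscr{A}(u)$ and $s\in(0,1]$, convexity gives $\alpha+s(\overline{\alpha}-\alpha)\in\mathscr{A}(u)$, so by minimality
\[
\|\alpha\|_*^2\le\|\alpha+s(\overline{\alpha}-\alpha)\|_*^2=\|\alpha\|_*^2+2s(\alpha,\overline{\alpha}-\alpha)_*+s^2\|\overline{\alpha}-\alpha\|_*^2 .
\]
Dividing by $s$ and letting $s\to0^+$ yields $(\alpha,\overline{\alpha}-\alpha)_*\ge0$. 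Dividing this by $\|\alpha\|_*$ gives $\langle\overline{\alpha},w\rangle\ge\langle\alpha,w\rangle=\|\alpha\|_*$, which is the claim.

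The only delicate point is the integrability bound in Step 2. It is precisely there that the growth restriction $\mu<\frac{N+2}{N-2}$ in $(G_1)$ and the convexity of the values of $G$ are used.
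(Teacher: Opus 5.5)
Your proof is correct, but it takes a different route from the paper. The paper does not look for a minimal-norm element directly. After noting that $\mathscr{A}(u)$ is bounded and convex, it introduces $\mathcal{Q}(\tilde\alpha,\tilde w)=\{(\alpha,w)\in\mathscr{A}(u)\times B_1 : \langle\tilde\alpha,w\rangle-\langle\alpha,\tilde w\rangle\ge 0\}$ and checks that this is a KKM map with closed convex values. It then applies the elementary KKM principle to get a pair $(\alpha,w)$ lying in every $\mathcal{Q}(\tilde\alpha,\tilde w)$; taking the supremum over $\tilde w\in B_1$ and then $\tilde\alpha=\alpha$ yields the claim. So the paper proves a saddle-point statement for the duality pairing on $\mathscr{A}(u)\times B_1$. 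That argument uses only the bilinear pairing and needs no inner product. You instead exploit the Hilbert structure of $E^*$ (projection of $0$ onto a closed convex set, the variational inequality, and the Riesz map), which is more elementary and gives an explicit $w$. The two routes produce the same $\alpha$: any pair satisfying the conclusion has $\|\overline{\alpha}\|_*\ge\langle\overline{\alpha},w\rangle\ge\|\alpha\|_*$ for all $\overline{\alpha}\in\mathscr{A}(u)$, so $\alpha$ is the (unique) minimal-norm element. One further point in your favour: in infinite dimensions the KKM principle by itself only gives the finite intersection property. Getting a common point of all the sets $\mathcal{Q}(\tilde\alpha,\tilde w)$ needs some compactness, e.g.\ weak compactness of $\mathscr{A}(u)\times B_1$, which hinges on $\mathscr{A}(u)$ being closed. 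The paper does not verify this. Your Step 2 (weak compactness of $\mathcal{S}(u)$ in $L^{2N/(N+2)}(\Omega)$, pushed forward by the bounded linear map $\iota$) supplies exactly that missing ingredient.
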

	\begin{proof}
		Let $u\in E$, $\alpha\in \mathscr{A}(u)$. Then, there exists $h\in \mathcal{S}(u)$ such that, for every  $w\in E$, we have
		\begin{equation*}
			\langle \alpha, w\rangle = \int_{\Omega} \Big(\nabla u(t) \cdot\nabla w(t) +\varphi(t)u(t)w(t)\Big)dt - \int_{\Omega} \big(h(t)w(t)\big)dt.
		\end{equation*}
		
		 By definition, 
		\begin{equation*}
			\|\alpha\|_* := \underset{\underset{\|w\|\le 1}{w\in E}}{\sup}\; \big| \langle \alpha, w\rangle\big|.
		\end{equation*}
		
		By assumption $(G_1)$, it follows that the set $\mathscr{A}(u)$ is bounded. Moreover, since $G$ has convex values, then $\mathscr{A}(u)$ is convex. Indeed, let $\alpha_1$, $\alpha_2 \in \mathscr{A}(u)$. Then, there exist $h_1$, $h_2 \in \mathcal{S}(u)$ such that for every $\lambda \in [0,1]$, we have
		\begin{equation*}
			\langle \lambda \alpha_1 + (1-\lambda)\alpha_2, w \rangle = \lambda \; \langle \alpha_1, w\rangle + (1-\lambda)\;\langle \alpha_2, w\rangle,
		\end{equation*}
		with 
		\begin{equation*}
			\lambda \; \langle \alpha_1, w\rangle = \lambda \; \Big( \int_{\Omega} \big(\nabla u(t) \cdot \nabla w(t) +\varphi(t)u(t)w(t)\big)dt - \int_{\Omega} \big(h_1(t)w(t)\big)dt\Big),
		\end{equation*}
		and
		\begin{equation*}
			(1-\lambda)\;\langle \alpha_2,w\rangle = (1-\lambda)\; \int_{\Omega} \big(\nabla u(t)\cdot \nabla w(t) +\varphi(t)u(t)w(t)\big)dt -\int_{\Omega}       (1-\lambda) \big (h_2(t)w(t)\big)dt.
		\end{equation*}
		We obtain
		\begin{equation*}
			\langle \lambda \alpha_1 + (1-\lambda)\alpha_2, w \rangle = \Big( \int_{\Omega} \big(\nabla u(t) \cdot \nabla w(t) +\varphi(t)u(t)w(t)\big)dt - \int_{\Omega} \Big(\lambda h_1(t) + (1-\lambda)h_2(t)\Big)w(t)  \,dt.
		\end{equation*}	 
	But $G$ has convex values, so
		\begin{equation*}
			\lambda h_1(t) + (1-\lambda)h_2(t) \in \mathcal{S}(u).
		\end{equation*}
		This implies that  $\lambda \alpha_1 + (1-\lambda)\alpha_2 \in \mathscr{A}(u)$.\\
		
		Let us consider the map 
		\begin{equation*}
			\mathcal{Q} : \mathscr{A}(u) \times B_1 \to \mathscr{A}(u)\times B_1
		\end{equation*}
		defined by
		\begin{equation*}
			\mathcal{Q} \Big(\tilde{\alpha},\Tilde{w}\Big) =\Big\{ \big(\alpha,w\big)\in \mathscr{A}(u) \times B_1\;\Big|\; \langle  \Tilde{\alpha}, w\rangle - \langle \alpha, \Tilde{w}\rangle \ge 0 \Big\},
		\end{equation*}
		where $B_1$ is the unit closed ball centered at $O$ in $E$. 
		
		\textbf{For $(\tilde{\alpha},\Tilde{w}) \in \mathscr{A}(u) \times B_1$, the set $\mathcal{Q}\Big(\tilde{\alpha},\Tilde{w}\Big)$ is closed and convex}:\\
		$\bullet$ \; Let $(\alpha_m,w_m) \in \mathcal{Q} \Big( \tilde{\alpha},\Tilde{w}\Big)$ be a sequence that converges to $(\alpha,w) \in \mathscr{A}(u) \times B_1 $.
		By continuity of $\tilde{\alpha}$, we have
		\begin{equation*}
			\langle  \Tilde{\alpha},w\rangle - \langle \alpha,\Tilde{w}\rangle = \lim\limits_{m\to \infty} \; \langle  \Tilde{\alpha}, w_m \rangle - \lim\limits_{m\to \infty}\; \langle \alpha_m,\Tilde{w}\rangle \ge 0,
		\end{equation*}
	which implies that $ (\alpha, w) \in \mathcal{Q} \Big( \tilde{\alpha},\Tilde{w}\Big)$. So $\mathcal{Q}\Big( \tilde{\alpha},\Tilde{w}\Big)$ is closed.\\
		
		$\bullet$ Let $(\alpha_1,w_1), (\alpha_2,w_2) \in \mathcal{Q}\Big( \tilde{\alpha},\Tilde{w}\Big)$, $\lambda\in[0,1]$.
		We have
		\begin{multline*}
			\langle  \Tilde{\alpha},\lambda w_1 +(1-\lambda)w_2\rangle - \langle \lambda \alpha_1 +(1-\lambda)\alpha_2, \Tilde{w} \rangle =\\
			\lambda \Big( \langle \Tilde{\alpha},w_1\rangle - \langle \alpha_1, \Tilde{w} \rangle\Big) + (1-\lambda) \Big( \langle \Tilde{\alpha},w_2\rangle - \langle \alpha_2,\Tilde{w} \rangle\Big).
		\end{multline*}
		But
		\begin{equation*}
			\langle \Tilde{\alpha},w_1\rangle - \langle \alpha_1, \Tilde{w}\ge 0 \;\; \text{and}\;\; 	\langle \Tilde{\alpha},w_2\rangle - \langle \alpha_2,\Tilde{w} \rangle \ge 0.
		\end{equation*}
		We conclude that
		\begin{equation*}
			\lambda \Big( \langle \Tilde{\alpha},w_1\rangle - \langle \alpha_1, \Tilde{w} \rangle\Big) + (1-\lambda) \Big( \langle \Tilde{\alpha},w_2\rangle - \langle \alpha_2,\Tilde{w} \rangle\Big) \in \mathcal{Q}\Big( \tilde{\alpha},\Tilde{w}\Big),
		\end{equation*}
		that is,
		\begin{equation*}
			\lambda (\alpha_1,w_1)+ (1-\lambda)(\alpha_2,w_2) \in \mathcal{Q}\Big( \tilde{\alpha},\Tilde{w}\Big),
		\end{equation*}
		that is, $\mathcal{Q}\Big( \tilde{\alpha},\Tilde{w}\Big)$ is convex.
		
		\textbf{The map $\mathcal{Q}$ is KKM}.
		Indeed, if there exist 
		\begin{equation*}
			(\tilde{\alpha}_1, \Tilde{w}_1), \cdots, (\tilde{\alpha}_n,\Tilde{w}_n)
		\end{equation*}
		such that 
		\begin{equation*}
			\text{conv}\; \Big\{ (\tilde{\alpha}_1,\Tilde{w}_1) \cdots, (\tilde{\alpha}_n,\Tilde{w}_n) \Big\} \not\subset \bigcup_{i=1}^{n} \mathcal{Q}\Big(\tilde{\alpha}_i,\Tilde{w}_i\Big),
		\end{equation*}
	then there exist $\lambda_1, \cdots, \lambda_n \in [0,1]$ such that $\sum_{i=1}^{n}\lambda_i =1$, and
		\begin{equation*}
			\sum_{i=1}^{n}\lambda_i (\tilde{\alpha}_i,\Tilde{w}_i) \notin \mathcal{Q}\Big(\tilde{\alpha}_j,\Tilde{w}_j\Big),\;\; \text{for}\;\; j=1, \cdots, n.
		\end{equation*}
	That is,
		\begin{equation*}
			\langle \tilde{\alpha}_j, \sum_{i=1}^{n}\lambda_i  \Tilde{w}_i \rangle - \langle  \sum_{i=1}^{n}\lambda_i  \Tilde{\alpha}_i,\Tilde{w}_j \rangle <0, \;\; \text{for }\;\; j=1,\cdots, n.
		\end{equation*}
		
		This implies that 
		\begin{equation*}
			0= \sum_{j=1}^{n}\lambda_j \Bigg[ \langle \tilde{\alpha}_j, \sum_{i=1}^{n}\lambda_i  \Tilde{w}_i\rangle - \langle  \sum_{i=1}^{n}\lambda_i  \Tilde{\alpha}_i,  \Tilde{w}_j \rangle\Bigg] <0,
		\end{equation*}
		which is a contradiction.
		 
	 By the Elementary KKM Principle \cite[Theorem 5.2]{GL} there exists $(\alpha, w) \in \mathcal{Q}\Big(\tilde{\alpha},\Tilde{w}\Big)$ for every $(\tilde{\alpha},\Tilde{w}) \in \mathscr{A}(u) \times B_1.$
	That is, for every  $(\tilde{\alpha},\Tilde{w}) \in \mathscr{A}(u) \times B_1$, we have
		\begin{equation*}
			\langle \Tilde{\alpha},w\rangle \ge \langle \alpha,\Tilde{w}\rangle.
		\end{equation*}
	Hence, there exist 
		$\alpha\in \mathscr{A}(u)$ and $w\in E$ with $\|w\|\le 1$ such that
		\begin{equation*}
			\|\alpha\|_* =\langle \alpha,w\rangle \le \langle \overline{\alpha},w\rangle, \;\; \text{for every}\;\;  \overline{\alpha}\in \mathscr{A}(u).
		\end{equation*}
	\end{proof}
	The following lemma ensures that the critical points of $F$ correspond to the weak solutions of the problem $(\ref{eq 11})$.
	\begin{lemma}
		\label{lem 5.6}
		Let $u\in E$ and $c\in F(u)$. Then, there exist $\alpha\in \mathscr{A}(u)$ such that
		\begin{equation*}
			\|\alpha\|_* \le |dF|(u,c).
		\end{equation*}
	\end{lemma}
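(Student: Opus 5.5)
We prove Lemma~\ref{lem 5.6} by contradiction, along the lines of \cite[Lemma 4.2]{Fri}. Fix $u\in E$ and $c\in F(u)$. If $|dF|(u,c)=\infty$ there is nothing to prove, since $\mathscr{A}(u)\neq\emptyset$. So assume $\sigma_0:=|dF|(u,c)<\infty$. Let $\alpha\in\mathscr{A}(u)$ and $w\in E$, $\|w\|\le 1$, be the pair furnished by Lemma~\ref{lem 5.5 }, so that $\|\alpha\|_*=\langle\alpha,w\rangle\le\langle\overline\alpha,w\rangle$ for every $\overline\alpha\in\mathscr{A}(u)$. We show $\|\alpha\|_*\le\sigma_0$. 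If $\|\alpha\|_*=0$ we are done. Otherwise fix $0<\sigma<\|\alpha\|_*$; it suffices to construct, for such $\sigma$, a deformation as in Definition~\ref{def 2.4}, which gives $|dF|(u,c)\ge\sigma$.

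\textbf{Candidate deformation.} The natural choice is to move in the direction $-w$ and lower the level accordingly. Define, for $(v,b)\in B((u,c),\delta)$ and $t\in[0,\delta]$,
\[
\mathcal H((v,b),t):=\big(v-tw,\; b-\sigma t\big).
\]
Then $d_F(\mathcal H((v,b),t),(v,b))=t\sqrt{\|w\|^2+\sigma^2}\le t\sqrt{1+\sigma^2}$, and (b) of Definition~\ref{def 2.4} holds trivially. Continuity of $\mathcal H$ is clear. The whole difficulty is to show that $\mathcal H$ takes values in $\mathbf{\mathit{graph}\,F}$. Concretely, given $b\in F(v)$, so that $I_g(v)\le b$ for some $g\in\mathcal S(G)$, we need some $\tilde g\in\mathcal S(G)$ with
\[
I_{\tilde g}(v-tw)\le b-\sigma t .
\]

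\textbf{Key estimate (main obstacle).} We expand
\[
I_{\tilde g}(v-tw)-I_g(v)
=\tfrac12\big[A(v-tw)-A(v)\big]-\int_\Omega\int_{v(x)}^{v(x)-tw(x)}\tilde g\,ds\,dx-\int_\Omega\int_0^{v(x)}(\tilde g-g)\,ds\,dx .
\]
Taking $\tilde g=g$ removes the last term. The quadratic part equals $-t\int(\nabla v\cdot\nabla w+\varphi vw)+O(t^2)$. We must then bound the middle term by $t\int h_v w+o(t)$ for some $h\in\mathcal S(v)$, uniformly for $v$ near $u$. This requires a Lebourg/Clarke-type mean-value argument. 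Upper semicontinuity of $G$ with compact convex values, the growth bound $(G_1)$ and Sobolev embeddings should give that the difference quotients of $\int_0^{\cdot}g$ along the segment have $\omega$-limit points in $\mathcal S(u)$. Combined with the strong convergence $v\to u$, this yields
\[
I_g(v-tw)-I_g(v)\le -t\,\langle\overline\alpha,w\rangle+o(t)
\]
for some $\overline\alpha\in\mathscr A(u)$, uniformly on a small ball. By Lemma~\ref{lem 5.5 }, $\langle\overline\alpha,w\rangle\ge\|\alpha\|_*>\sigma$. Choosing $\delta$ small so that the $o(t)$ term is at most $(\|\alpha\|_*-\sigma)t$ then gives $I_g(v-tw)\le b-\sigma t$.

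I expect the delicate point to be exactly this uniformity. The argument is by contradiction: assume there are sequences $v_k\to u$, $t_k\to0$ violating the inequality. Then extract weakly convergent quotients $\frac{1}{t_k}\int_{v_k-t_kw}^{v_k}g$ in $L^{r}$, and use the convexity and closedness of the values of $G$, as in the proof of Lemma~\ref{lem 5.4}, to identify the limit with $h w$ for some $h\in\mathcal S(u)$. This contradicts $\langle\overline\alpha,w\rangle\ge\|\alpha\|_*$.

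Once the deformation exists for every $\sigma<\|\alpha\|_*$, we get $|dF|(u,c)\ge\|\alpha\|_*$, which is the claim.
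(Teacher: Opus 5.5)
Your proposal is correct and takes essentially the paper's route: the same deformation $(v,b)\mapsto(v-tw,\,b-\sigma t)$ with the same selection $g$, and the same uniform first-order estimate obtained by contradiction against $\langle\overline\alpha,w\rangle\ge\|\alpha\|_*$ from Lemma 5.5. The differences are only technical. You identify the limit of the difference quotients by weak compactness and convexity, as in Lemma 5.4, where the paper compares with $\overline g$ and $\underline g$. You also keep the $w$ of Lemma 5.5 rather than approximating it by a smooth function, and you bound the quadratic term simply as $\tfrac{t^2}{2}|A(w)|\le Ct^2$; this is cleaner than the paper's eigenvalue-based bound, which does not apply since $w$ is not an eigenfunction.
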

	\begin{proof}
		If $0\in \mathscr{A}(u)$, then $u$ si a weak solution of problem $(\ref{eq 11})$. If $|dF|(u,c) =\infty$, then $\|\alpha\|_*< \infty.$\\
		Suppose $|dF|(u,c) <\infty$ and $0\notin \mathscr{A}(u)$. By Lemma $\ref{lem 5.5 }$, there exist $\alpha\in \mathscr{A}(u)$ and $\hat{w}\in E$ with $\|\hat{w}\|\le 1$ such that
		\begin{equation*}
			\|\alpha\|_* =\langle \alpha,\hat{w}\rangle \le \langle \overline{\alpha},\hat{w}\rangle, \;\; \text{for every}\;\;  \overline{\alpha}\in \mathscr{A}(u).
		\end{equation*}
	If $\|\alpha\|_* =0$, the proof is complete. If $\|\alpha\|_* >0$,	let
			$\varepsilon < \frac{\|\alpha\|_*}{2}$.
		There exists $w\in \mathcal{C}_0^{\infty}(\Omega)$ with $\|w\| \le 1$ and $\|\hat{w}-w\|$ sufficiently small such that
		\begin{equation}
			\label{eq 14}
			\|\alpha\|_* -\varepsilon < \langle \overline{\alpha},w\rangle =\int_{\Omega} \Big(\nabla u(t) \cdot\nabla w(t) +\varphi(t)u(t)w(t) \Big)dt - \int_{\Omega} \overline{h}(t)w(t) \,dt,\;\; \text{for every}\;\;  \overline{\alpha}\in \mathscr{A}(u).
		\end{equation}
		Moreover, there is $\delta_1>0$ such that for every $(u_0,b)\in B_{\delta_1}(u,c)\cap \mathbf{\mathit{graph}\,F}$, for every $s\in [0, \delta_1]$, we have
		\begin{multline}
			\label{eq 15}
			s \Bigg(\int_{\Omega} \big(\nabla u_0(t)\cdot \nabla w(t) +\varphi(t)u_0(t)w(t)\big) dt \Bigg) - \int_{\Omega}\Big( \int_{u_0(t) -sw(t)}^{u_0(t)}g(x)dx \Big) dt \ge s \big(\|\alpha\|_* -\varepsilon \big),\\ \text{for every}\;\; g\in \mathcal{S}(G)\;\; \text{such that}\;\; I_g(u_0)\le b.
		\end{multline}
		If not, there would exist $(u_n,b_n)\to (u,b)$ in $B_{\delta_1}(u,c)\cap \mathbf{\mathit{graph}\,F}$, $s_n\to 0$, and $g_n \in \mathcal{S}(G)$ such that $I_{g_n}(u_n)\le b_n$ and
		\begin{equation}
			\label{eq 16}
			\|\alpha\|_* -\varepsilon > \Bigg(\int_{\Omega} \big(\nabla u_n(t)\cdot \nabla w(t) +\varphi(t)u_n(t)w(t)\big) dt  \Bigg) 
			-\dfrac{1}{s_n} \int_{\Omega}\Big( \int_{u_n(t) -s_nw(t)}^{u_n}g_n(x)dx \Big) dt.
		\end{equation}
		On $\big\{ t\;|\; w(t)\ge 0 \big\}$,
		since $g_n(x)\le \overline{g}(x)$, then we have 
		\begin{equation*}
			\int_{u_n(t) -s_nw(t)}^{u_n(t)}g_n(x)dx \le \int_{u_n(t) -s_nw(t)}^{u_n(t)}\overline{g}(x)dx.
		\end{equation*}
		On $\big\{ t\;|\; w(t)< 0\big\}$,
		since $g_n(x)\ge \underline{g}(x)$, we have
		\begin{equation*}
			\int_{u_n(t) -s_nw(t)}^{u_n(t)}g_n(x)dx =- \int^{u_n(t) -s_nw(t)}_{u_n(t)} g_n(x)dx  \le - \int^{u_n(t) -s_nw(t)}_{u_n(t)} \underline{g}(x)dx = \int_{u_n(t) -s_nw(t)}^{u_n(t)}\underline{g}(x)dx.
		\end{equation*}
		By $(\ref{eq 16})$, we obtain
		\begin{multline*}
			\|\alpha\|_* -\varepsilon > \Bigg(\int_{\Omega} \big(\nabla u_n(t) \cdot\nabla w(t) +\varphi(t)u_n(t)w(t)\big) dt \Bigg) \\
			-\dfrac{1}{s_n} \int_{\big\{ t\;|\; w(t)\ge 0\big\}}\Big( \int_{u_n(t) -s_nw(t)}^{u_n(t)}\overline{g}(x)dx \Big) dt \\
			-	\dfrac{1}{s_n} \int_{\big\{ t\;|\; w(t)< 0 \big\}}\Big( \int_{u_n(t) -s_nw(t)}^{u_n(t)}\underline{g}(x)dx \Big) dt.	
		\end{multline*}
	By passing to the limit as $n \to \infty$, we have
	
		\begin{multline*}
			\|\alpha\|_* -\varepsilon > \int_{\Omega} \big(\nabla u(t) \cdot\nabla w(t) +\varphi(t)u(t)w(t)\big) dt
			- \int_{\big\{ t\;|\; w(t)\ge 0 \big\}} \Big(\overline{g}(u)w(t) \Big)dt
			- \int_{\big\{ t\;|\; w(t)< 0  \big\}} \Big(\underline{g}(u)w(t)\Big)dt\\
			= \int_{\Omega} \Big(\nabla u(t) \cdot\nabla w (t)+\varphi(t)u(t)w(t)\Big) dt -\int_{\Omega} \Big(\hat{g}(t)w(t)\Big)dt\\ > 	\|\alpha\|_* -\varepsilon \;\; \text{(by $(\ref{eq 14})$)},
		\end{multline*}
		where 
		\begin{equation*}
			\hat{g}(t) =
			\begin{cases}
				\overline{g}(u(t)),\;\; \text{if}\; \;t\in\{t\;|\; w(t)\ge 0\},\\
				\underline{g}(u(t)), \;\; \text{if} \;t\in\{t\;|\; w(t)<0 \},
			\end{cases}
		\end{equation*}
		which is a contradiction.
	
		Let
		\begin{equation*}
			\delta:= \min \Bigg\{ \delta_1, \dfrac{2\varepsilon}{|\lambda|\,\|w\|_{L^2(\Omega)}^2}\Bigg\},
		\end{equation*}
		where $\lambda$ is the constant appearing in $(\ref{eq 12})$.
		
		Define
		\begin{eqnarray*}
			\mathcal{H} : B_{\delta}(u,c)\cap \mathbf{\mathit{graph}\,F} \times[0,\delta] &\to& \mathbf{\mathit{graph}\,F} \\
			\big((x,b), s\big) &\to& \Big(x-sw, \;b-s\big(	\|\alpha\|_* -2\varepsilon\big)\Big).
		\end{eqnarray*}
		The function $\mathcal{H}$ is well defined. Indeed, let $g\in \mathcal{S}(G)$ such that $I_g(x)\le b$. We have
		\begin{equation*}
			I_g(x-sw) = \dfrac{1}{2}\int_{\Omega} \Bigg(\Big|\nabla \Big(x(t)-sw(t)\Big)\Big|^2 +\varphi(t)\Big(x(t)-sw(t)\Big)^2\Bigg)dt -\int_{\Omega}\Big( \int_{0}^{x(t)-sw(t)} g(\tau)d\tau \Big)dt.
		\end{equation*}
		This implies that
		\begin{multline*}
			I_g(x-sw) = \Bigg( \dfrac{1}{2}\int_{\Omega} \Big(|\nabla x(t)|^2+\varphi(t)x^2(t) \Big)dt -\int_{\Omega}\Big( \int_{0}^{x(t)} g(\tau)d\tau \Big)dt\Bigg)\\ +\int_{\Omega} \dfrac{s^2}{2}\Big(|\nabla w(t)|^2+\varphi(t)w^2(t) \Big)dt
			- s \Bigg( \int_{\Omega} \Big(\nabla x (t)\cdot \nabla w(t) +\varphi(t)x(t)w(t) \Big)dt \Bigg) \\ + \int_{\Omega}\Big( \int_{x(t) -sw(t)}^{x(t)}g(\tau)d\tau  \Big) dt.
		\end{multline*}
		From $(\ref{eq 15})$, we get
		\begin{equation*}
			I_g(x-sw) \le I_g(x) - s\big(\|\alpha\|_* -\varepsilon \big) + \Big|\int_{\Omega} \dfrac{s^2}{2}\Big(|\nabla w(t)|^2+\varphi(t)w^2(t)\Big)dt\Big|.
		\end{equation*}
		Since $s\le \delta \le \frac{2\varepsilon}{|\lambda|\,\|w\|_{L^2(\Omega)}^2}$,
		then, $s |\lambda|\,\|w\|^2_{L^2(\Omega)}\le 2\varepsilon$.
		That is, $s^2 |\lambda|\,\|w\|_{L^2(\Omega)}^2\le 2s\varepsilon$. Since by the equation $(\ref{eq 12})$,
		\[\Big|\int_{\Omega} \dfrac{s^2}{2}\Big(|\nabla w(t)|^2+\varphi(t)w^2(t)\Big)dt\Big| = \dfrac{s^2}{2}|\lambda|\,\|w\|^2_{L^2(\Omega)}, \] we finally have
		\begin{equation*}
			I_g(x-sw) \le b - s\big(\|\alpha\|_* -\varepsilon \big) +s\varepsilon = b -s\big(\|\alpha\|_* -2\varepsilon \big).
		\end{equation*}
		Thus, \[\mathcal{H} \big((x,b), s\big) \in \mathbf{\mathit{graph}\,F}.\]
	By the Definition of weak slope of $F$ (Definition $\ref{def 2.4}$), we have that
		\begin{equation*}
			\|\alpha\|_* -2\varepsilon \le |dF|(u,c),
		\end{equation*}
		for every arbitrary $\varepsilon>0$. Therefore,
		\begin{equation*}
			\|\alpha\|_* \le |dF|(u,c).
		\end{equation*}
	\end{proof} 
	\begin{lemma}
		\label{lem 5.7}
		Assume that $(G_1)$ and $(G_2)$ are satisfied. Then, every Palais-Smale sequence at level $c\in \mathbb{R}$ is bounded.
	\end{lemma}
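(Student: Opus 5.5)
Let $(u_n,c_n)\subset \mathbf{\mathit{graph}\,F}$ with $c_n\to c$ and $|dF|(u_n,c_n)\to 0$. I would follow the classical Ambrosetti--Rabinowitz argument, using Lemma \ref{lem 5.6} to replace the derivative. By Lemma \ref{lem 5.6}, for each $n$ there is $\alpha_n\in\mathscr{A}(u_n)$, coming from some $h_n\in\mathcal{S}(u_n)$, with $\|\alpha_n\|_*\le |dF|(u_n,c_n)\to 0$. Since $c_n\in F(u_n)$, there is also $g_n\in\mathcal{S}(G)$ with $I_{g_n}(u_n)\le c_n\le c+1$ for $n$ large. The goal is the inequality
\[
c+1+\|\alpha_n\|_*\|u_n\|\ \ge\ I_{g_n}(u_n)-\tfrac1\beta\langle\alpha_n,u_n\rangle ,
\]
together with a lower bound of the right-hand side by $(\tfrac12-\tfrac1\beta)A(u_n)$ minus lower-order terms.

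Expanding, the right-hand side equals
\[
\Big(\tfrac12-\tfrac1\beta\Big)A(u_n)+\int_\Omega\Big(\tfrac1\beta h_n(t)u_n(t)-\int_0^{u_n(t)}g_n(x)\,dx\Big)dt .
\]
Since $h_n(t)\in G(u_n(t))$, we have $h_nu_n\ge \min(u_nG(u_n))$. Since $g_n\in G$ a.e., we have $\int_0^{u_n}g_n\le\int_0^{u_n}\overline g$ for $u_n>0$, and symmetrically with $\underline g$ for $u_n<0$; this is what $\int_0^xG$ in $(G_2)$ must be read as. Hence on $\{|u_n(t)|\ge R\}$, $(G_2)$ makes the integrand nonnegative. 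On $\{|u_n|<R\}$, $(G_1)$ bounds the integrand by a constant $C$, so the integral is at least $-C|\Omega|$. Therefore
\[
\Big(\tfrac12-\tfrac1\beta\Big)A(u_n)\le C_1+\|\alpha_n\|_*\|u_n\|\le C_1+\|u_n\|
\]
for $n$ large.

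The main obstacle is that $A$ is not coercive when $\lambda_1\le0$, so $A(u_n)$ does not control $\|u_n\|^2$. I would fix this by adding a multiple of $\|u_n\|^2_{L^2}$. Write $A(u)=\|u\|^2+\int\varphi u^2$; using $\varphi\in L^{N/2}$, split $\varphi=\varphi_1+\varphi_2$ with $\|\varphi_1\|_{N/2}$ small and $\varphi_2\in L^\infty$. This gives
\[
A(u)\ge \tfrac12\|u\|^2-K\|u\|_{L^2}^2 .
\]
Next I would control $\|u_n\|_{L^2}^2$ by the nonlinear part. Taking $\beta>2$, $(G_2)$ integrates to $\int_0^x\underline g\ \ge\ c_0|x|^\beta-C$ for $x\ge R$, and similarly for $x\le -R$. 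From this one can refine the estimate: use $\theta\in(2,\beta)$ instead of $\beta$ in the identity above. The integrand is then at least $(\tfrac1\theta-\tfrac1\beta)\min(u_nG(u_n))\ge c'|u_n|^\beta-C$, which gives
\[
\Big(\tfrac12-\tfrac1\theta\Big)A(u_n)+c'\|u_n\|_{L^\beta}^\beta\le C+\|u_n\|.
\]
By Hölder, $\|u_n\|_{L^2}^2\le C\|u_n\|_{L^\beta}^2$. Combining, we get $a\|u_n\|^2-K'\|u_n\|_{L^\beta}^2+c'\|u_n\|_{L^\beta}^\beta\le C+\|u_n\|$, and since $\beta>2$ the $L^\beta$ terms are bounded below by a constant. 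Hence $\|u_n\|^2\le C+C\|u_n\|$, so $(u_n)$ is bounded. The delicate points are the pointwise selection inequalities relating $g_n,h_n$ to $\underline g,\overline g$, and checking the measurability/integrability of these bounds, which $(G_1)$ supplies.
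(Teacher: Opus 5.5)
Your proof is correct. It differs from the paper's mainly in how the lack of coercivity of $A(u)=\int_\Omega(|\nabla u|^2+\varphi u^2)\,dt$ is handled when $\lambda_1\le 0$.

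\textbf{The paper's route.} It uses the spectral splitting $E=Y\oplus Z$ of Lemma \ref{lem 5.3} and Remark \ref{rm 3}, i.e.\ $A(u_n)\ge \zeta\|z_n\|^2+\lambda_1\|y_n\|^2_{L^2(\Omega)}$. It obtains the superquadratic term from $(\beta\tau-1)\int_0^{u_n}g_n$ together with the primitive bound (\ref{eq 19}). It then concludes in a single sentence from $\dim Y<\infty$ and $\beta>2$.

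\textbf{Your route.} You prove directly the lower bound $A(u)\ge \frac12\|u\|^2-K\|u\|^2_{L^2(\Omega)}$, by splitting $\varphi$ into a part that is small in $L^{N/2}$ and a bounded part. You then absorb the negative $L^2$ term with the $L^\beta$ term coming from $(\frac1\theta-\frac1\beta)\min(u_nG(u_n))$. Both arguments rest on the same mechanism: a superquadratic Ambrosetti--Rabinowitz term dominates a negative quadratic $L^2$ term.

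\textbf{What each buys.} Your version is self-contained: it needs neither the eigenfunctions, nor $\zeta>0$, nor the finite dimensionality of $Y$. It also writes out the final absorption explicitly. The paper's last step implicitly requires the same absorption, via $\|y_n\|_{L^2}\le\|u_n\|_{L^2}\le C\|u_n\|_{L^\beta}$ and equivalence of norms on $Y$, but never states it.

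Your argument is also more careful on two points the paper passes over.
\begin{enumerate}
\item[(1)] The selection $g_n$ defining the primitive in $I_{g_n}$ and the selection $h_n\in\mathcal{S}(u_n)$ attached to $\alpha_n$ are unrelated. $(G_2)$ can only compare them if $\int_0^x G$ is read as $\int_0^x\overline g$ for $x>0$ and $\int_0^x\underline g$ for $x<0$, as you do. This is also the reading the paper's own inequality ``$\beta\tau\int_0^{u_n}g_n\le \tau u_nh_n$'' tacitly needs.
\item[(2)] $(G_2)$ only holds for $|x|\ge R$, so the set $\{|u_n|<R\}$ must be controlled separately by $(G_1)$, which you do explicitly.
\end{enumerate}
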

	\begin{proof}
		Let $(u_n)\subset E=Y\oplus Z$ (where $Y$ and $Z$ are as in Lemma $\ref{lem 5.3}$) and $c_n\in F(u_n)$ such that $c_n \to c $ and  $|dF|(u_n,c_n)\to 0$,  as $n\to \infty$. By Lemma $\ref{lem 5.6}$, there exists $\alpha_n\in \mathscr{A}(u_n)$ with
			$\|\alpha_n\|_* \le |dF|(u_n,c_n)$, where
		\begin{equation}
			\label{eq 17}
			\langle \alpha_n,w\rangle =\int_{\Omega} \Big(\nabla u_n(t)\cdot \nabla w(t) +\varphi(t)u_n(t)w(t)\Big)dt - \int_{\Omega} \big(h_n(t)w(t)\big)dt,
		\end{equation}
	for every $w\in E$.
	
	Let $\tau\in \Big(\frac{1}{\beta}, \frac{1}{2}\Big)$. For sufficiently large $n\in \mathbb{N}$, we have \[\|\alpha_n\|_* \le |dF|(u_n,c_n)\le1\;\; \text{and}\;\; 1+c_n \ge I_{g_n}(u_n).\] Hence, for such $n$, we have 
	\begin{equation*}
		c_n+1 + \|u_n\| \ge I_{g_n}(u_n)- \tau \langle \alpha_n, u_n\rangle.
	\end{equation*}
	We have
		\begin{align*}
			c_n+1 + \|u_n\| &\ge I_{g_n}(u_n) - \tau \langle \alpha_n, u_n\rangle \\ &= \dfrac{1}{2}\int_{\Omega} \Big(|\nabla u_n(t)|^2 +\varphi(t)u_n^2(t)\Big)dt -\int_{\Omega}\Big( \int_{0}^{u_n(t)} g_n(x)dx\Big)dt- \tau \langle \alpha_n, u_n\rangle \\
			&=\Big(\dfrac{1}{2}-\tau\Big)\int_{\Omega} \Big(|\nabla u_n(t)|^2 +\varphi(t)u_n^2(t) \Big)dt +\int_{\Omega}\Big(\tau h_n(t)u_n(t) - \int_{0}^{u_n(t)} g_n(x)dx\Big)dt.
		\end{align*}
		On the one hand, by assumption $(G_2)$, 
		\[\beta \tau \int_{0}^{u_n(t)}g_n(x)dx \le \tau u_n(t) h_n(t).\]
		Therefore,
		\begin{align}
			\label{eq 18}
			\tau h_n(t)u_n(t) - \int_{0}^{u_n(t)} g_n(x)dx &\ge - \int_{0}^{u_n(t)}g_n(x)dx +\beta \tau \int_{0}^{u_n(t)}g_n(x)dx\nonumber\\
			&= (\beta \tau-1)\int_{0}^{u_n(t)}g_n(x)dx.
		\end{align}
		On the other hand, by assumption $(G_2)$, there exist constants $k$, $l>0$ such that for every $t\in \mathbb{R}$ and every $g\in \mathcal{S}(G)$ (see \cite{Fri}), we have 
		\begin{equation}
			\label{eq 19}
			\int_{0}^{t}g(x)dx \ge k|t|^\beta -l.
		\end{equation}
		It follows that 
		\begin{equation}
			\label{eq 20}
		\int_{\Omega} \Big(\int_{0}^{u_n(t)}g_n(x)dx\Big)dt \ge k\|u_n\|^\beta_{L^\beta(\Omega)}-l_1,
		\end{equation}
		for some positive constant $l_1$.\\
		By Remark $\ref{rm 3}$, $u_n =y_n +z_n$, $y_n\in Y$, $z_n \in Z$ and
		\begin{equation}
			\label{eq 21}
			\int_{\Omega} \Big(|\nabla u_n(t)|^2 +\varphi(t)u_n^2(t) \Big)dt \ge \zeta \|z_n\|^2 +\lambda_1 \|y_n\|^2_{L^2(\Omega)}.
		\end{equation}
		Combining equations $(\ref{eq 18})$, $(\ref{eq 20})$ and $(\ref{eq 21})$, we obtain
		\begin{equation*}
			c_n+1 + \|u_n\|\ge \Big(\dfrac{1}{2}-\tau\Big) \Big(\zeta \|z_n\|^2 +\lambda_1 \|y_n\|^2_{L^2(\Omega)}\Big)+k(\beta\tau -1)\|u_n\|^\beta_{L^\beta(\Omega)} -l_2,
		\end{equation*}
		for some positive constant $l_2$.\\
		Since $\dim Y$ is finite, $\beta>2$ and $k(\beta\tau -1)>0$, it follows that $\|u_n\|<\infty$. 
	\end{proof}
 Let us show that the multivalued functional $F$ verifies conditions $(I_1)$ and $(I_2)$ in Theorem $\ref{theo 4.2}$.
	\begin{lemma}
		\label{lem 5.8}
		Under assumptions $(G_1)-(G_3)$, the multivalued functional $F$ satisfies conditions $(I_1)-(I_2)$ in Theorem $\ref{theo 4.2}$.
	\end{lemma}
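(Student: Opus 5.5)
We follow the classical proof of \cite[Theorem 2.18]{Wi}, adapted to the multivalued setting through the inequalities $\underline{g}\le g\le \overline{g}$ for $g\in\mathcal{S}(G)$. Throughout, $E=Y\oplus Z$ with $Y=\spana(e_1,\dots,e_n)$ and $Z$ as in Lemma \ref{lem 5.3}. Since any $c\in F(u)$ satisfies $c\ge \inf_{g\in\mathcal{S}(G)} I_g(u)$, verifying $(I_1)$ amounts to bounding $I_g$ from below on $N$, uniformly in $g$. Verifying $(I_2)$ amounts to producing, for each $u\in\partial M$, a selection $g$ with $I_g(u)\le 0$; we will always use $g=\underline{g}$ for this, so that $(u,0)\in \mathbf{\mathit{graph}\,F}$.

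\textbf{Step 1: $(I_1)$.} By $(G_3)(1)$, for every $\epsilon>0$ there is $r_\epsilon>0$ with $x z\le \epsilon x^2$ for all $z\in G(x)$ and $|x|\le r_\epsilon$. Combining this with $(G_1)$ gives, for all $x$ and all $g\in\mathcal{S}(G)$,
\[\int_0^x g(s)\,ds\le \frac{\epsilon}{2}x^2 + C_\epsilon |x|^{\mu+1}.\]
Here $g(s)$ has the sign of $s$ on $[-r_\epsilon,r_\epsilon]$ up to the $\epsilon s$ term, and we integrate the growth bound beyond that interval. For $u\in Z$, Lemma \ref{lem 5.3} and the Sobolev embedding (note $\mu+1<2^*$) yield
\[I_g(u)\ge \frac{\zeta}{2}\|u\|^2-\frac{\epsilon}{2}C\|u\|^2-C'_\epsilon\|u\|^{\mu+1}.\]
Choosing $\epsilon$ small and then $r$ small gives $\inf_{u\in N}I_g(u)\ge \alpha>0$ uniformly in $g$, hence $\inf_{u\in N}F(u)>0$.

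\textbf{Step 2: $(I_2)$ on the bottom face.} For $u=y\in Y$ with $\|y\|\le\rho$, Remark \ref{rm 3} together with $\lambda_i\le\lambda_n$ gives $\frac12 A(y)\le \frac{\lambda_n}{2}\|y\|^2_{L^2}$. Assumption $(G_3)(2)$ applied pointwise gives $\int_\Omega\int_0^{y}\underline{g}\ge \frac{\lambda_n}{2}\|y\|^2_{L^2}$. Therefore $I_{\underline g}(y)\le 0$.

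\textbf{Step 3: $(I_2)$ on the curved part.} For $u=y+\lambda z$ with $\|u\|=\rho$, we use \eqref{eq 19} with $g=\underline{g}$, which gives $\int_0^x\underline{g}\ge k|x|^\beta-l$. On the finite-dimensional space $Y\oplus\mathbb{R}z$ all norms are equivalent, so
\[I_{\underline g}(u)\le C\|u\|^2-k'\|u\|^\beta+l|\Omega|,\]
which is negative once $\rho$ is large, since $\beta>2$. We then take $\rho>r$ large enough, and define $\Psi_m$ as in the proof of Theorem \ref{theo 4.2}, with second component $b=0$ and $\rho_0$ chosen so that the unit sphere maps onto $\partial M$. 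This gives $\Psi_m(\partial B_1^m)\subseteq \partial M\times(-\infty,0]$.

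\textbf{Main obstacle.} The delicate point is Step 1. The bound must hold uniformly over all measurable selections $g$, and $(G_3)(1)$ is only a $\limsup$ involving $\max(xG(x))$. We will handle this by integrating the pointwise inequality $g(s)\le \overline{g}(s)$ for $s>0$ (respectively $g(s)\ge\underline{g}(s)$ for $s<0$), which reduces the uniform estimate to an estimate for $\overline{g}$ and $\underline{g}$ alone. A secondary issue is that $(G_3)(2)$ must be read as holding for all $x$; we assume this reading, since it is exactly what Step 2 uses.
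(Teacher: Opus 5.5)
Your Steps 1 and 2 are correct and follow the paper. The proof of $(I_1)$ uses $(G_3)(1)$, $(G_1)$, Lemma~\ref{lem 5.3} and Sobolev, and the argument on $Y$ uses $(G_3)(2)$ and Remark~\ref{rm 3}, just as the paper does. The gap is in Step 3, in the map $\Psi_m$ itself. Condition $(I_2)$ requires $\Psi_m$ to take values in $\mathbf{\mathit{graph}\,F}$ on all of $\mathbb{R}^m$, not only on $\partial B_1^m$. With second component identically $0$ you would need $0\in F(\rho_0L(x))$ for every $x$, and this fails. At $x=(0,\dots,0,1/\rho_0)$ the $E$-component is $z\in N$, and your Step 1 gives $F(z)\subseteq[\alpha,\infty)$ with $\alpha>0$. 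Restricting to $B_1^m$ does not help. The intersection property behind Theorem~\ref{theo 4.2}, applied with $\gamma=\operatorname{id}$, forces $\Psi_m(B_1^m)$ to meet $N\times\mathbb{R}$. So any admissible second component must be $\ge\alpha$ somewhere in the ball.

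The missing idea is to lift the $E$-component through a continuous selection of $F$. If $h$ denotes the $E$-component, take $\Psi_m(x)=\big(h(x),I_{\underline g}(h(x))\big)$. This lies in $\mathbf{\mathit{graph}\,F}$ because $\underline g\in\mathcal{S}(G)$. It is continuous because $I_{\underline g}$ is continuous on $E$. Its second component is $\le 0$ on $\partial B_1^m$ exactly by your Steps 2 and 3. The paper does essentially this with $I_{g^*}$, where $g^*=\overline g$ on $[0,\infty)$ and $g^*=\underline g$ on $(-\infty,0)$; this gives the least element of $F(u)$. So the delicate point of the lemma is here, not in Step 1.

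There is a second problem, which you inherit from the paper's own equally terse treatment. The map from the proof of Theorem~\ref{theo 4.2}, with a scalar factor $\rho_0$, does not do what you claim.
\begin{itemize}
\item For $x_m\le 0$ its $E$-component $\rho_0\sum_{i\le d}x_ie_i$ does not depend on $x_m$. So it is not injective, and $\Psi_m$ is not a homeomorphism.
\item The $e_i$ are $L^2$-orthonormal but not $H^1_0$-orthonormal, so the $H^1_0$-norm on $Y\oplus\mathbb{R}z$ is in general not a multiple of the coordinate norm. Hence no choice of $\rho_0$ puts $\rho_0L(\partial B_1^m\cap\{x_m>0\})$ on the sphere $\|u\|=\rho$.
\end{itemize}
Use instead a genuine homeomorphism $h$ of $\mathbb{R}^m$ onto $Y\oplus\mathbb{R}z$ with $h(B_1^m)=M$ and $h(\partial B_1^m)=\partial M$. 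You get one by radially rescaling a linear isomorphism with the gauge of the convex body $M$ about an interior point. Combined with the lift above, $\Psi_m=(h,I_{\underline g}\circ h)$ then satisfies $(I_2)$.
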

	\begin{proof}
	1. \textbf{We first show that the multivalued functional $F$ satisfies condition $(I_1)$}.\\
	
	Let 
 \[Y= \spana\;\{e_1, \cdots, e_{n}\} \;\; \text{and}\;\; Z= Y^\perp.\] 
 From assumption $(G_4)$, for every $\varepsilon>0$, there exits $\delta>0$ such that for every $g(x)\in G(x)$, we have 
 \[g(x)\le 2\varepsilon x\;\; \text{if}\;\; x\in[0,\delta]\;\; \text{and}\;\; g(x)\ge 2\varepsilon x\;\; \text{if}\;\; x\in[-\delta, 0].\]
 The assumption $(G_1)$ implies the existence of a positive constant $a_1$ such that 
 \[|G(x)|\le a_1|x|^\mu \;\; \text{if}\;\: |x|\ge \delta.\]
 Thus, for every $t\in \mathbb{R}$ and every $g\in \mathcal{S}(G)$, we have 
 \[\int_{0}^{t}g(x)dx\le \varepsilon t^2+ a_2 |t|^{\mu +1}.\]
From Remark $\ref{rm 3}$, if $u =y +z$, $y\in Y$, $z \in Z$ then
	\begin{equation*}
		\int_{\Omega} \Big(|\nabla u(t)|^2 +\varphi(t)u^2(t) \Big)dt \ge \zeta \|z\|^2 +\lambda_1 \|y\|^2_{L^2(\Omega)}.
	\end{equation*}
	We deduce that, for every $u\in Z$, 
	\begin{align*}
		I_g(u)&\ge \frac{\zeta}{2}\|u\|^2 -\varepsilon \int_{\Omega} |u(t)|^2dt -a_2 \int_{\Omega}|u(t)|^{\mu +1}dt\\
		&=\frac{\zeta}{2}\|u\|^2 -\varepsilon \|u\|^2_{L^2(\Omega)}-a_2 \|u\|^{\mu+1}_{L^{\mu+1}(\Omega)}.
	\end{align*}
	By Sobolev embedding theorem, 
	\begin{align*}
		I_g(u)&\ge \frac{\zeta}{2}\|u\|^2 - \varepsilon\overline{c_1}\|u\|^2 -\overline{c_2}\|u\|^{\mu+1}\\
		&=\Big(\frac{\zeta}{2}-\varepsilon\overline{c_1}\Big)\|u\|^2 -\overline{c_2}\|u\|^{\mu+1},
	\end{align*}
	for some constants $\overline{c_1}>0$, $\overline{c_2}>0$.\\
	 We may choose $\varepsilon$ so that $\frac{\zeta}{2}-\varepsilon\overline{c_1}>0$.
	For every $u\in Z$, we can find $\nu>0$ and $r>0$ sufficiently small such that
	\begin{equation}
		\label{eq 22}
		I_g(u)\ge \nu>0, \;\; \text{whenever}\;\; \|u\|=r.
	\end{equation}
	That is to say, we have demonstrated that $\underset{u\in N}{\inf}\; F(u) >0$, where $N$ is defined in Theorem $\ref{theo 4.2}$.\\
	
2. \textbf{Let us show that the multivalued functional $F$ satisfies condition $(I_2)$}. \\

Define $z:= \frac{e_{n+1}}{\|e_{n+1}\|}r$, where $r$ is given in $(\ref{eq 22})$. Let $\rho_0>0$. Consider the map (for $m= d+1$ where $d:= \dim Y$)
\begin{eqnarray*}
	\Psi : \mathbb{R}^m  &\to& \Psi(\mathbb{R}^m) \subset \mathbf{\mathit{graph}\,F}\\
	\mu=(\mu_1,\ldots,\mu_m)
	&\mapsto& \begin{cases}
		\Bigg(\rho_0\big(\sum_{i=1}^d \mu_i e_i + \mu_m z\big), \;I_{g^*}\Big(\rho_0(\mu_1 e_1+ \cdots+\mu_de_d+ \mu_mz)\Big) \Bigg), &\text{if}\;\; \mu_m\ge0,\\[1.2 em]
		\Bigg(\rho_0\sum_{i=1}^d \mu_i e_i, \;I_{g^*}\Big(\rho_0(\mu_1 e_1+ \cdots+\mu_de_d)\Big)\Bigg), &\text{if}\;\; \mu_m\le0.
	\end{cases}
\end{eqnarray*}and
	\begin{equation*}
		g^*(x)= \begin{cases}
			\overline{g}(x)\quad \text{if}\quad x\ge0,\\
			\underline{g}(x)\quad\text{if}\quad x<0.
		\end{cases}
	\end{equation*}
	Set
	\begin{equation*}
	L(\mu_1,\ldots,\mu_m)
	:= 
	\begin{cases}
		\sum_{i=1}^d \mu_i e_i + \mu_m z, &\text{if}\;\; \mu_m\ge0,\\[0.5 em]
		\sum_{i=1}^d \mu_i e_i, &\text{if}\;\; \mu_m\le0.
	\end{cases}
		\end{equation*}

The map $\Psi$ is a homeomorphism: Since the functional $I_{g^*}$ is continuous, $\Psi$ is continuous.

	--	\textbf{Surjective.} By construction, $\Psi$ is surjective.

	--	\textbf{Injective.}
	Suppose that $L(\mu)=0$. Then, 
	$\sum_{i=1}^d \mu_i e_i =-\mu_m z$. Since $X=Y\oplus Z$, we have  $\mu_m=0$, and thus $\mu_1=\cdots=\mu_d=0$ because $(e_i)_{1\le i\le d}$ is a basis of $Y$. Therefore \(\Psi\) is injective.
	  
	-- \textbf{Inverse map.} For $(u, c)\in \Psi(\mathbb{R}^{m})$, the inverse map $\Psi^{-1}$ of $\Psi$ is 
	\begin{equation*}
		\Psi^{-1}(u, c) =[u]_{\mathcal{B}_m},
	\end{equation*}
	where $[u]_{\mathcal{B}_m}$ is the representation of $u$ in the basis $\mathcal{B}_m=\{e_1, \cdots, e_{d}, z\}$. Since the mapping $u\mapsto [u]_{\mathcal{B}_m}$ is continuous, it follows that $\Psi^{-1}$ is continuous.

	First, observe that, by assumption $(G_3)$(2) and Remark $\ref{rm 3}$, on $Y$, we have 
	\[I_{g^*}(u) \le \int_{\Omega}\Big( \lambda_{n}\frac{u^2(t)}{2}-\int_{0}^{u(t)}g(x)dx\Big)dt\le 0.\]
	
	For every $w\in E$ and every $g\in \mathcal{S}(G)$, relation $(\ref{eq 19})$ together with the Hölder inequality yield
		\begin{equation*}
		I_g(w)\le \dfrac{\|w\|^2}{2} + \|\varphi\|_{L^{N/2}(\Omega)} \frac{\|w\|^2_{L^{2^*}(\Omega)}}{2} -k \|w\|^{\beta}_{L^\beta(\Omega)}+l\,|\Omega|,\;\; \text{where}\;\; 2^*:= \dfrac{2N}{N-2}.
	\end{equation*}
	
	Since, on the finite dimensional space $Y\oplus \mathbb{R}.z$, all norms are equivalent, for every $w\in Y\oplus \mathbb{R}.z$ and every $g\in \mathcal{S}(G)$, we have
	\begin{equation*}
		I_g(w)\le \dfrac{\|w\|^2}{2} + \|\varphi\|_{L^{N/2}(\Omega)} \frac{\|w\|^2}{2} -k \|w\|^{\beta}+l\,|\Omega|.
	\end{equation*}
	
	Since $\beta>2$, then
	\begin{equation*}
		I_g(w)\to -\infty, \;\; \text{whenever}\;\; \|w\| \to + \infty,\;\; w\in Y\oplus \mathbb{R}.z.
	\end{equation*}
	Thus, we can find $\rho>r>0$, $\rho$ sufficiently large such that
	\begin{equation*}
	\underset{w\in \partial M}{\max}\,I_g(w)=0, 
	\end{equation*}
	where $\partial M$ is defined in Theorem $\ref{theo 4.2}$.
	
	Now, define the map
\begin{eqnarray*}
	V : \partial B_1^m   &\to& Y\oplus\mathbb{R}.z\\
	\mu= (\mu_1, \ldots, \mu_m) &\mapsto& \begin{cases}
		\dfrac{\rho L(\mu)}{\|L(\mu)\|},\;\; &\text{if}\;\; \mu_m\ge0,\\[1 em]
		\min\,\left\{ 1, 	\dfrac{\rho}{\|L(\mu)\|} \right\}L(\mu), \;\; &\text{if}\;\; \mu_m\le0.
	\end{cases}
\end{eqnarray*}
	By definition, $V(\mu)\subseteq \partial M$. Therefore, we can choose $\rho_0$ in such a way that $V(\mu)$ represents the component of $\Psi_m(\partial B_1^m)$ in $E$. That is,
	\[\Psi_m(\partial B_1^{m})\subseteq \partial M \times (-\infty, 0].\]
	\end{proof}
	\begin{proof}[\bf{Proof of Theorem $\ref{theo 5.1}$ $($Version 1$)$}] By Lemma $\ref{lem 5.8}$, the conditions $(I_1)$ and $(I_2)$ in Theorem $\ref{theo 4.2}$ are verified. 
	By Theorem $\ref{theo 4.2}$, there exists a sequence $(u_n,c_n)\subset \mathbf{\mathit{graph}\,F}$ satisfying
	\begin{equation}
		c_n\to c, \;\; |dF|(u_n,c_n)\to 0,\;\; n\to \infty.
	\end{equation}
	By Lemma $\ref{lem 5.7}$, the sequence $(u_n)$ is bounded. 
	Therefore, by Rellich embedding theorem, $(u_n)$ is relatively compact in $L^{\mu+1}(\Omega)$ because $2<\mu+1< \frac{2N}{N-2}$. There exists a subsequence of $(u_n)$ still denoted $(u_n)$ that converges in $L^{\mu+1}(\Omega)$. \\
	There exists $\mathscr{C}>0$ such that $\|h_n\|_{L^q(\Omega)}\le \mathscr{C}$, where $\frac{1}{\mu +1}+\frac{1}{q}=1$ for every $n$. 
	Indeed, by assumption $(G_1)$, we have 
	\begin{equation}
		\label{eq 24}
		\int_{\Omega} |h_n(t)|^q dt \le C_1 \int_{\Omega} |u_n(t)|^{\mu q} dt + C_2 \int_{\Omega}dt = C_1 \int_{\Omega} |u_n(t)|^{\mu +1} dt + C_2 \int_{\Omega}dt \le \mathscr{C}<\infty,
	\end{equation}
	for some positives constants $C_1$ and $C_2$ and where we use the following standard inequality: 
	if $a_1,\dots,a_n \geq 0$ and $\kappa \ge 1$, then 
	\[
	\Big(\sum_{i=1}^n a_i\Big)^\kappa \leq n^{\,\kappa-1}\sum_{i=1}^n a_i^\kappa.
	\]
	
	We would like to show that $(u_n)$ converges in $E$. To this end, we show that $(u_n)$ is a Cauchy sequence. Since $L^p(\Omega)$, $1\le p<\infty$ is complete space, we have \[\|u_n -u_m\|_{L^p(\Omega)} \to 0,\;\; \text{whenever}\;\; n,\; m\to \infty.\]
	Using Hölder inequality, we have
	\[
	\begin{aligned}
		\|u_n - u_m\|^2
		&= \int_{\Omega} \Bigg(\nabla\Big(u_n(t) - u_m(t)\Big)\cdot \nabla\Big(u_n(t) - u_m(t)\Big)\Bigg)\, dt \\[4pt]
		&= \int_{\Omega} \Bigg(\nabla u_n(t) \cdot \nabla\Big(u_n(t) - u_m(t)\Big)
		+ \nabla u_m(t) \cdot \nabla\Big(u_m(t) - u_n(t)\Big)\Bigg)\, dt \\[4pt]
		&= \langle \alpha_n , u_n - u_m \rangle
		- \int_{\Omega} \varphi(t) u_n(t)\Big(u_n(t) - u_m(t)\Big) dt
		+ \int_{\Omega} h_n(t) \Big(u_n(t) - u_m(t)\Big)\, dt \\[4pt]
		&\qquad \qquad
		+ \langle \alpha_m , u_m - u_n \rangle - \int_{\Omega} \varphi(t) u_m(t)\Big(u_m(t) - u_n(t)\Big) dt
		+ \int_{\Omega} h_m(t) \Big(u_m(t) - u_n(t)\Big)\, dt \\[4pt]
		&\le \langle \alpha_n , u_n - u_m \rangle
		+ \langle \alpha_m , u_m - u_n \rangle - \int_{\Omega} \varphi(t) \Big(u_n(t) - u_m(t)\Big)^2 dt\\[4pt]
		&\qquad\qquad\qquad\qquad\qquad
		+ \int_{\Omega} \Big|h_n(t)\Big(u_n(t) - u_m(t)\Big)\Big|\, dt
		+ \int_{\Omega} \Big|h_m(t)\Big(u_m(t) - u_n(t)\Big)\Big|\, dt \\[4pt]
		&\le \|\alpha_n\|_* \|u_n - u_m\|
		+ \|\alpha_m\|_* \|u_n - u_m\|
		+ \mathscr{C} \|u_n - u_m\|_{L^{\mu+1}(\Omega)}
		+ \mathscr{C} \|u_m - u_n\|_{L^{\mu+1}(\Omega)} \\[4pt]
		&\qquad\qquad\qquad\qquad\qquad\qquad 
		- \int_{\Omega} \varphi(t) \Big(u_n(t) - u_m(t)\Big)^2 dt\\
		&= C\bigl(\|\alpha_n\|_* + \|\alpha_m\|_*\bigr)
		+ 2\mathscr{C} \|u_n - u_m\|_{L^{\mu+1}(\Omega)} -\int_{\Omega} \varphi(t) \Big(u_n(t) - u_m(t)\Big)^2 dt,
	\end{aligned}
	\] for some positive constant $C$.
	
	From Lemma $\ref{lem 5.1}$ and the fact that $\|\alpha_n\|*\le |dF|(u_n, c_n)$ and $\|\alpha_m\|*\le |dF|(u_m, c_m)$, we get
	\[\|\alpha_n\|_* \to 0,\; \|\alpha_m\|_*\to 0, \;\int_{\Omega} \varphi(t) \Big(u_n(t) - u_m(t)\Big)^2 dt \to 0, \;\; n, \; m\to \infty.\]
	Thus we have proved that 
	\begin{equation*}
		\|u_n-u_m\|\to 0,\;\; n,\; m\to \infty.
	\end{equation*}
	The sequence $(u_n)$ is then a Cauchy sequence in the complete space $E$, and therefore it converges in $E$.

	The Theorem $\ref{theo 3.3}$ gives the existence of a critical point $u_0$ of $F$ at level $c$. Lemma $\ref{lem 5.6}$ implies that $u_0$ is a solution of problem $(\ref{eq 11})$. 
	\end{proof}
	\begin{proof}[\bf{Proof of Theorem $\ref{theo 5.1}$ $($Version 2$)$}] By Lemma $\ref{lem 5.8}$, the conditions $(I_1)$ and $(I_2)$ in Theorem $\ref{theo 4.2}$ are verified. 
		By Theorem $\ref{theo 4.2}$, there exists a sequence $(u_n,c_n)\subset \mathbf{\mathit{graph}\,F}$ satisfying
		\begin{equation}
			c_n\to c, \;\; |dF|(u_n,c_n)\to 0,\;\; n\to \infty.
		\end{equation}
		By Lemma $\ref{lem 5.7}$, the sequence $(u_n)$ is bounded. 
	Therefore, by Rellich embedding theorem, $(u_n)$ is relatively compact in $L^{\mu+1}(\Omega)$ because $2<\mu+1< \frac{2N}{N-2}$.	The sequence \((u_n)\) admits a convergent subsequence in \(L^{\mu+1}(\Omega)\), which we still denote by \((u_n)\). We also have \(u_n \to u\) in \(L^{\mu +1}_{\mathrm{loc}}(\Omega)\).  
		For every \(v \in \mathcal{C}_0^\infty(\Omega)\), since \(\|\alpha_n\|_* \le |dF|(u_n,c_n) \to 0\) as \(n \to \infty\), we deduce that  
		\[
		\langle \alpha, v \rangle = \lim_{n\to\infty} \langle \alpha_n, v \rangle = 0.
		\]
		Hence \(\alpha = 0\), and \(u\) is a nontrivial weak solution in \(E\) of problem \((P)\).
		
		Indeed, let \(K \subset \Omega\) be a compact set such that \(\mathrm{supp}\, v \subset K\). We have
		\[
		\langle \alpha_n , v \rangle
		= \int_{K} \Big( \nabla u_n(t)\cdot\nabla v(t) + \varphi(t) u_n(t) v(t) \Big)\, dt
		- \int_{K} h_n(t) v(t)\, dt.
		\]
		
		By \((\ref{eq 24})\), the sequence \((h_n)\) is bounded in \(L^\tau(K)\), with \(1 < \tau < +\infty\). Therefore, there exist \(h \in L^\tau(K)\) and a subsequence of \((h_n)\), still denoted \((h_n)\), such that  
		\[
		h_n \rightharpoonup h \;\; \text{in } L^\tau(K), \;\; 1 < \tau < +\infty.
		\]
		As in the proof of Lemma \(\ref{lem 5.4}\), one can show that \(h \in \mathcal{S}(u)\). It follows that, for every \(v \in \mathcal{C}_0^\infty(\Omega)\),
		\[
		\int_K h_n(t) v(t)\, dt \to \int_K h(t) v(t)\, dt, \;\; n \to \infty.
		\]
		
		On the other hand, using Hölder’s inequality and the fact that \(v\) is bounded, there exists \(c>0\) such that
		\[
		\int_K \Big| \varphi(t)\big(u_n(t)-u(t)\big) v(t) \Big|\, dt
		\le c \|\varphi\|_{L^{N/2}(K)} \|u_n - u\|_{L^{\mu +1}(K)} \to 0, \;\; n \to \infty,
		\]
		where
		\[
		\frac{2}{N} + \frac{1}{\mu +1} = 1.
		\]
		
		We deduce that
		\begin{multline*}
			\langle \alpha_n , v \rangle - \langle \alpha , v \rangle
			= \int_{K} \Big( \nabla (u_n(t)-u(t)) \cdot \nabla v(t)
			+ \varphi(t) (u_n(t)-u(t)) v(t) \Big)\, dt \\
			- \int_{K} (h_n(t) - h(t)) v(t)\, dt
			\rightarrow 0, \;\; n \to \infty.
		\end{multline*}
		
		\end{proof}

\begin{remark}
	If $\lambda_1>0$, it suffices to use The Mountain Pass Theorem \cite[Theorem 2.14]{Fri} instead of the Linking Theorem $\ref{theo 4.2}$.
\end{remark}

\end{document}